
\documentclass{amsart}
\usepackage{fullpage}
\usepackage{stmaryrd}
\usepackage{amssymb}
\usepackage{mathrsfs}




\newtheorem{theorem}{Theorem}[section]
\newtheorem{lemma}[theorem]{Lemma}

\newtheorem{prop}[theorem]{Proposition}

\newtheorem*{thmA}{Theorem A}
\newtheorem*{thmB}{Theorem B}

\theoremstyle{definition}

\theoremstyle{remark}

\numberwithin{equation}{section}


\newcommand{\bC}{{\mathbf{C}}}

\newcommand{\bF}{{\mathbf{F}}}

\newcommand{\bN}{{\mathbf{N}}}


\newcommand{\Stab}{{\operatorname{Stab}}}
\newcommand{\Irr}{{\operatorname{Irr}}}

\newcommand{\Syl}{{\operatorname{Syl}}}

\newcommand{\dl}{{\operatorname{dl}}}
\newcommand{\cl}{{\operatorname{cl}}}

\newcommand{\Out}{{\operatorname{Out}}}
\newcommand{\Alt}{{\operatorname{Alt}}}


\let\nor=\triangleleft

\begin{document}

\title{Blocks of defect of $p$-solvable groups}

\author{YONG YANG}
\address{Department of Mathematics, Texas State University, 601 University Drive, San Marcos, TX 78666, USA.}
\makeatletter
\email{yang@txstate.edu}
\makeatother

\subjclass[2000]{20C20, 20C15, 20D10}
\date{}



\begin{abstract}
Let $p$ be a prime such that $p \geq 5$. Let $G$ be a finite $p$-solvable group and let $p^a$ be the largest power of $p$ dividing $\chi(1)$ for an irreducible character $\chi$ of $G$, we show that $|G:\bF(G)|_p \leq p^{5.5a}$. Let $G$ be a finite $p$-solvable group with trivial maximal normal solvable subgroup and we denote $|G|_p=p^n$, then $G$ contains a block of defect less than or equal to $\lfloor \frac {2n} {3} \rfloor$. 
\end{abstract}

\maketitle
\maketitle
\Large
\section{Introduction} \label{sec:introduction8}

Let $G$ be a finite group. Let $p$ be a prime and $|G|_p=p^n$. An irreducible ordinary character of $G$ is called $p$-defect zero if and only if its degree is divisible by $p^n$. It is an interesting problem to give necessary and sufficient conditions for the existence of $p$-blocks of defect zero. If a finite group $G$ has a character of $p$-defect zero, then $O_p(G)=1$ ~\cite[Corollary 6.9]{WF}. Unfortunately, the converse is not true.

Although the block of defect zero may not exist in general, one could try to find the smallest defect $d(B)$ of a block $B$ of $G$. In ~\cite[Theorem A]{AENA2}, Espuelas and Navarro raised the following question. If $G$ is a finite group with $O_p(G)=1$ for some prime $p \geq 5$, and denote $|G|_p=p^n$, does $G$ contain a block of defect less then or equal to$\lfloor \frac n 2 \rfloor$?


In ~\cite{YY5}, the author provided a partial answer to the previous question under the condition where $G$ is solvable. In this paper, we prove a related result about $p$-solvable groups.

\begin{thmA}
Let $p$ be a prime such that $p \geq 5$. Let $G$ be a finite $p$-solvable group such that $O_{\infty}(G)=1$, and we denote $|G|_p=p^n$. Then $G$ contains a $p$-block $B$ such that $d(B) \leq \lfloor \frac {2n} {3} \rfloor $.
\end{thmA}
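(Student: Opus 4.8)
The plan is to convert the statement about block defects into a statement about orbit sizes, and then to run the degree bound proved above as the quantitative engine. First I would apply Fong's theory of blocks of $p$-solvable groups. Writing $K=O_{p'}(G)$, for every $\lambda\in\Irr(K)$ there is a $p$-block $B$ of $G$ lying over the block of $\lambda$ whose defect group is contained in a Sylow $p$-subgroup of the inertia group $I_G(\lambda)$; hence $d(B)\le v_p(|I_G(\lambda)|)$. Because $K$ has order prime to $p$ it acts trivially on $\Irr(K)$, so the conjugation action factors through $\bar G=G/K$, which is again $p$-solvable, satisfies $O_{p'}(\bar G)=1$, and has $|\bar G|_p=p^{n}$. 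Consequently it is enough to exhibit a single $\lambda\in\Irr(K)$ whose $G$-orbit has $p$-part at least $p^{\lceil n/3\rceil}$, equivalently $v_p(|I_G(\lambda)|)\le\lfloor 2n/3\rfloor$: such a $\lambda$ produces a block $B$ with $d(B)\le\lfloor 2n/3\rfloor$, as required.

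Since $O_{\infty}(G)=1$ and nilpotent groups are solvable, $\bF(G)=1$; the theorem proved above therefore gives $p^{n}=|G|_p\le p^{5.5a}$, where $p^a$ is the largest power of $p$ dividing a degree in $\Irr(G)$, so a degree of large $p$-part is already at hand. To locate the orbit I would argue by induction on $|G|$, using Clifford theory to descend to the bottom of the Fitting series of $\bar G$. Writing $\bar V=O_p(\bar G)=O_{p',p}(G)/K$, the $p$-group $\bar V$ acts coprimely on $K$, hence on a chief factor $W$ of $K$ and on its dual $\Irr(W)$; the search for a large $G$-orbit on $\Irr(K)$ then reduces to the search for a point of $\Irr(W)$ whose stabilizer in the relevant $p$-section has small $p$-part, i.e. a large orbit for a $p$-group acting on a module over a field of characteristic different from $p$. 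This is the standard device that rewrites a block-defect problem as a regular-orbit problem for a coprime linear group.

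The quantitative heart is to show that such a coprime action cannot have all of its orbits of small $p$-part. Here I would deploy exactly the regular-orbit and orbit-size estimates underlying the degree bound. Via $\chi(1)=|G:I_G(\lambda)|\,\psi(1)$ for $\chi\in\Irr(G)$ lying over $\lambda$, the orbit $p$-part $v_p(|G:I_G(\lambda)|)$ and the ``higher'' degree $v_p(\psi(1))$ together make up $v_p(\chi(1))\le a$; if every orbit had small $p$-part at every layer, then either the inequality $|G:\bF(G)|_p\le p^{5.5a}$ would be violated or the accumulated contributions $v_p(\psi(1))$ would exceed the budget left by $\lfloor 2n/3\rfloor$. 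Summing the contributions of the successive layers of the Fitting series and tracking the $p$-part carefully should yield an orbit whose $p$-part is at least $p^{\lceil n/3\rceil}$, the constant $2/3$ being precisely what the worst layered configuration allows.

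\textbf{Main obstacle.} The hard part will be the extremal case in which every relevant orbit is as small as possible. This occurs when the acting groups at the successive chief factors are of symplectic or extraspecial type, exactly the configurations in which regular orbits fail and the $p$-part of the orbit is forced down. Controlling the \emph{accumulation} of the $p$-part across several such layers, so that the total stays within the $\lfloor 2n/3\rfloor$ bound rather than the weaker bound produced by a single application of $n\le 5.5a$, is the crux; it is here that the hypothesis $p\ge5$ (which excludes the exceptional small-prime modules) and the precise value of the constant in the degree bound become indispensable.
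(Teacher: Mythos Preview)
Your proposal has a genuine structural gap: it is built on the solvable-group template (Fong reduction to $K=O_{p'}(G)$, chief factors viewed as modules, regular orbits for coprime linear groups, accumulation through a Fitting series), but the hypothesis $O_{\infty}(G)=1$ places you in exactly the opposite regime. Here $\bF(G)=1$ and $F^*(G)$ is a direct product of nonabelian simple $p'$-groups; the chief factors of $G$ inside $K$ are \emph{not} modules but products of nonabelian simple groups, so your ``search for a point of $\Irr(W)$ whose stabilizer has small $p$-part'' for a coprime linear $p$-group never materialises, and the extraspecial/symplectic obstruction you highlight is not the one that arises. More quantitatively, invoking $|G:\bF(G)|_p\le p^{5.5a}$ only gives $a\ge n/5.5$, hence a block of defect at most $n-\lceil n/5.5\rceil$, far from $\lfloor 2n/3\rfloor$; you concede this and appeal to an ``accumulation across layers'', but no mechanism is offered that would force the constant $2/3$ rather than something worse.

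In the paper the $2/3$ comes from an explicit \emph{three}-term split, and Theorem~B is not used at all. One first proves an orbit statement (Proposition~\ref{prop4}) for the action of $\bar G=G/F^*(G)$ on $\Irr(F^*(G))$: there exist $N\lhd\bar G$ with $N\subseteq\bF_2(\bar G)$ and $\lambda\in\Irr(F^*(G))$ such that $\bC_P(\lambda)\subseteq N$ for every $P\in\Syl_p(\bar G)$, and the Sylow $p$-subgroup of $N\bF(\bar G)/\bF(\bar G)$ is abelian. The proof of this uses the shape of $\Out(S)$ for simple $S$ together with a Dolfi-type partition lemma for the permutation part, not linear-group orbit theorems. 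Writing $p^{n_1}=|N\cap\bF(\bar G)|_p$, $p^{n_2}=|N\bF(\bar G):\bF(\bar G)|_p$, $p^{n_3}=|\bar G:N|_p$, one then produces, for each $i$, a $p$-block all of whose irreducible characters have $p$-part at least $p^{n_i}$: for $n_3$ via the orbit of $\lambda$; for $n_2$ via Gow's regular orbit theorem applied to the abelian $p$-group $N\bF(\bar G)/\bF(\bar G)$ acting on $\Irr(O_{p'}(\bF(\bar G))/\Phi)$; for $n_1$ via Lemma~\ref{simplecoprime} giving a regular orbit of the $p$-group $N\cap\bF(\bar G)$ on $\Irr(F^*(G))$. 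In each case Feit's Lemma~V.2.3 (over a normal $p'$-subgroup every character in a block lies over a fixed constituent) converts the orbit statement into a statement about all characters in the block. Since $n_1+n_2+n_3=n$, one of the three blocks has defect at most $\lfloor 2n/3\rfloor$. Your outline does not isolate the normal subgroup $N$, the abelian-Sylow feature of $N\bF(\bar G)/\bF(\bar G)$, or this three-layer decomposition, and those are precisely the ingredients that deliver the constant.
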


Let $p^a$ denote the largest power of $p$ dividing $\chi(1)$ for an irreducible character $\chi$ of $G$. Moret\'o and Wolf ~\cite[Theorem A]{MOWOLF} proved that for $G$ solvable, there exists a product $\theta=\chi_1(1) \cdots \chi_t(1)$ of distinct irreducible characters $\chi_i$ such that $|G: \bF(G)|$ divides $\theta(1)$ and $t \leq 19$. This implies that $|G:\bF(G)|_p \leq p^{19a}$. In this paper, we show the following result for $p$-solvable groups.

\begin{thmB}
Let $G$ be a $p$-solvable group where $p$ is a prime and $p \geq 5$. Suppose that $p^{a+1}$ does not divide $\chi(1)$ for all $\chi \in \Irr(G)$, then $|G: \bF(G)|_p\leq p^{5.5a}$.
\end{thmB}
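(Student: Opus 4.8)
The plan is to follow the orbit-theoretic scheme behind the Moret\'o--Wolf bound, tracking only the single prime $p$ and feeding in the hypothesis $p\ge 5$ through regular-orbit theorems, which is what should turn their exponent $19$ into $5.5$. The first observation is arithmetic: since $\bF(G)$ is nilpotent, its $p$-part is $O_p(G)$, so for $P\in\Syl_p(G)$ we have $|G:\bF(G)|_p=|P:O_p(G)|$, and I would induct on $|G|$. The second is structural: $p$-solvability forces every non-abelian composition factor to be a $p'$-group, and a $p'$-simple group has Schur multiplier prime to $p$, so the components are $p'$-groups and $|\bF^{*}(G):\bF(G)|_{p}=1$; we may therefore replace $\bF(G)$ by the self-centralizing subgroup $\bF^{*}(G)$. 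Writing $\bF^{*}(G)=O_p(G)\times R$ with $R$ a normal $p'$-subgroup, the embedding $G/\bF^{*}(G)\hookrightarrow\Aut(\bF^{*}(G))$ then splits the target quantity into a coprime part, the $p$-part of the image in $\Aut(R)$, and a characteristic-$p$ part, the $p$-part of the image in $\Aut(O_p(G))$.

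For the coprime part I would install the dictionary between orbits and degrees. Let $\bar P$ be the image in $\Aut(R)$ of $P$; it is a $p$-group acting coprimely, hence completely reducibly, on the chief factors of $R$. From a $G$-orbit on the irreducible characters of an abelian chief factor of $R$ one obtains, by Clifford theory and Gallagher's theorem, an irreducible character of $G$ whose degree is divisible by the orbit size; since the $p$-part of a $\bar P$-orbit never exceeds that of the corresponding $G$-orbit, every $\bar P$-orbit has $p$-part at most $b(G)_p=p^{a}$. The engine is then the regular-orbit theorem for $p$-groups with $p\ge 5$ in coprime action: on a suitable chief factor $\bar P$ has an orbit recovering essentially its full order, so that \emph{one} orbit --- rather than the up to $19$ distinct degrees of Moret\'o--Wolf --- already pays for almost all of $|\bar P|$, the only excess coming from a short list of exceptional modules and linear groups over the relevant $\FF_{q}$.

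The characteristic-$p$ part is where I expect the real difficulty to lie, and it is the reason the statement is restricted to $p$-solvable groups. Here $\bC_{G}(O_p(G))\le O_p(G)$ and $G/O_p(G)$ acts faithfully on the non-coprime module $O_p(G)/\Phi(O_p(G))$ over $\FF_{p}$, so orbit theorems no longer convert the $p$-part of the acting group into character degrees. To control it one must use the finer character theory of $p$-solvable groups: analysing, via the Fong--Reynolds reduction and the projective representations of the inertia groups over $\Irr(O_p(G))$, how the $p$-part of a degree $\chi(1)=t\,e\,\theta(1)$ grows with the index and ramification data, and then combining this with the coprime estimate. The final bookkeeping assembles the two contributions: each floor of the upper $p$-series generically costs exponent $1$, the deviations are confined to the finitely many exceptional configurations isolated above, and summing the worst cases over the handful of relevant floors yields the total exponent $5.5$. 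Making this last combination sharp --- that is, establishing regular-orbit and ramification estimates tight enough to keep the constant at $5.5$ rather than something larger --- is the step I expect to be the main obstacle.
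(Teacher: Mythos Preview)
Your proposal has a genuine gap that you have essentially named yourself: the ``characteristic-$p$ part'', controlling the $p$-contribution from the action on $O_p(G)$, is left as an acknowledged obstacle with only a gesture toward Fong--Reynolds and ramification data. This is exactly the hard content of the solvable case, and nothing in your sketch produces a constant for it. The paper does not attempt to redo this from scratch; instead it cuts at the solvable radical $T=O_{\infty}(G)$ rather than at $F^{*}(G)$. Since $\bF(T)=\bF(G)$ and the degree hypothesis passes to the normal subgroup $T$, the author's earlier solvable result is invoked as a black box to give $|T:\bF(G)|_p\le p^{2.5a}$, absorbing the entire non-coprime problem into that citation.

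The new work is then only to bound $|G:T|_p=|\bar G|_p$, where $\tilde G=G/T$ has $F^{*}(\tilde G)$ a direct product of nonabelian simple $p'$-groups and $\bar G=\tilde G/F^{*}(\tilde G)$. This is \emph{not} done by a generic coprime regular-orbit theorem as you outline. Proposition~\ref{prop4} (built on Proposition~\ref{prop3}) combines the fact that every nonabelian simple $S$ has at least four distinct character degrees, Dolfi's set-stabiliser lemma for $p$-solvable permutation groups, and the structure of $\Out(S)$ (abelian-by-cyclic-by-$S_3$ at worst) to produce $N\nor\bar G$ with $N\subseteq\bF_2(\bar G)$, abelian Sylow $p$-subgroup in $N\bF(\bar G)/\bF(\bar G)$, and some $v\in\Irr(F^{*}(\tilde G))$ with $\bC_P(v)\subseteq N$ for all $P\in\Syl_p(\bar G)$. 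Three separate irreducible characters then bound the three factors $|\bar G:N|_p$, $|N\bF(\bar G):\bF(\bar G)|_p$, $|N\cap\bF(\bar G)|_p$ by $p^a$ each (using, respectively, Clifford theory over $v$, Gow's regular-orbit theorem on the abelian quotient, and Lemma~\ref{simplecoprime}), so $|\bar G|_p\le p^{3a}$ and the final constant is $5.5=2.5+3$, not a sharpening of a single orbit estimate.
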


We first fix some notation:
\begin{enumerate}


\item We use $\bF(G)$ to denote the Fitting subgroup of $G$. Let $\bF_0(G) \leq \bF_1(G) \leq \bF_2(G) \leq \cdots \leq \bF_n(G)=G$ denote the ascending Fitting series, i.e. $\bF_0(G)=1$, $\bF_1(G)=\bF(G)$ and $\bF_{i+1}(G)/\bF_i(G)=\bF(G/\bF_i(G))$. $\bF_i(G)$ is the $i$th ascending Fitting subgroup of $G$.

\item We use $F^*(G)$ to denote the generalized Fitting subgroup of $G$.

\item We use $O_{\infty}(G)=1$ to denote the maximal normal solvable subgroup of $G$.

\item Let $G$ be a finite group, we denote $cd(G)=\{\chi(1)\ | \ \chi \in \Irr(G) \}$.

\end{enumerate}

\section{Blocks of small defect} \label{sec:Blocks}
Let $G$ be a finite group. Let $p$ be a prime and $|G|_p=p^n$. An irreducible ordinary character of $G$ is called $p$-defect $0$ if and only if its degree is divisible by $p^n$. By ~\cite[Theorem 4.18]{WF}, $G$ has a character of $p$-defect $0$ if and only if $G$ has a $p$-block of defect $0$. An important question in the modular representation theory of finite groups is to find the group-theoretic conditions for the existence of characters of $p$-defect $0$ in a finite group.

It is an interesting problem to give necessary and sufficient conditions for the existence of $p$-blocks of defect zero. If a finite group $G$ has a character of $p$-defect $0$, then $O_p(G)=1$ ~\cite[Corollary 6.9]{WF}. Unfortunately, the converse is not true. Zhang ~\cite{Zhang} and Fukushima ~\cite{Hiroshi1, Hiroshi2} provided various sufficient conditions where a finite group $G$ has a block of defect zero.




Although the block of defect zero may not exist in general, one could try to find the smallest defect $d(B)$ of a block $B$ of $G$. One of the results along this line is given by ~\cite[Theorem A]{AENA2}. In ~\cite{AENA2}, Espuelas and Navarro bounded the smallest defect $d(B)$ of a block $B$ of $G$ using the $p$-part of $G$. Using an orbit theorem ~\cite[Theorem 3.1]{AE1} of solvable linear groups of odd order, they showed the following result. Let $G$ be a (solvable) group of odd order such that $O_p(G) = 1$ and $|G|_p = p^n$, then $G$ contains a $p$-block $B$ such that $d(B) \leq \lfloor \frac n 2 \rfloor$. The bound is best possible, as shown by an example in ~\cite{AENA2}.

It is not true in general that there exists a block $B$ with $d(B) \leq \lfloor \frac n 2 \rfloor$, as $G = A_7 (p = 2)$ shows us. However, the counterexamples where only found for $p=2$ and $p=3$. By work of Michler and Willems ~\cite{Michler,Willems} every simple group except possibly the alternating group has a block of defect zero for $p \geq 5$. The alternating group case was settled by Granville and Ono in ~\cite{GranvilleOno} using number theory.

Based on these, the following question raised by Espuelas and Navarro ~\cite{AENA2} seems to be natural. If $G$ is a finite group with $O_p(G)=1$ for some prime $p \geq 5$, and denote $|G|_p=p^n$, does $G$ contain a block of defect less then $\lfloor \frac n 2 \rfloor$?

In this section, we study this question and show that for a $p$-solvable group $G$ where $O_{\infty}(G)=1$ and $p \geq 5$, $G$ contains a block of defect less than or equal to $\lfloor \frac {2n} 3 \rfloor$. 


We need the following results about simple groups.

\begin{lemma}\label{simplecoprime}
Let $A$ act faithfully and coprimely on a nonabelian simple group $S$. Then $A$ has at least $2$ regular orbits on $\Irr(S)$.
\end{lemma}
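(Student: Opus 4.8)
The plan is to use the classification of finite simple groups to reduce to the case that $S$ is of Lie type with $A$ a cyclic group of field automorphisms, and then to run a counting argument that translates fixed irreducible characters into conjugacy classes of fixed-point subgroups.

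\emph{Reduction.} Since the action is faithful, $A$ embeds in $\Aut(S)$; and since $S$ is simple nonabelian, $A\cap\Inn(S)\cong A\cap S$ has order dividing $\gcd(|A|,|S|)=1$, so in fact $A\hookrightarrow\Out(S)$. If $S$ is alternating or sporadic, then $\Out(S)$ has order at most $6$ and its prime divisors lie in $\{2,3\}$, while $2$ (and, outside the Suzuki groups, $3$) divides $|S|$; coprimality then forces $A=1$, in which case every $A$-orbit is a singleton and the claim reduces to $|\Irr(S)|\geq 2$, which is clear. So I may assume $S$ is of Lie type in characteristic $p$, with $q=p^f$. Here $\Out(S)$ is generated by diagonal, field, and graph automorphisms; the diagonal part and the graph part have order dividing $|S|$ (their orders involve only $2$, $3$, and divisors of $q^i\pm1$), so coprimality kills them and $A$ injects into the cyclic group of field automorphisms. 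Thus I take $A=\langle\sigma\rangle$ cyclic of order $m$ with $\sigma$ a field automorphism and $\gcd(m,|S|)=1$; in particular $m$ is coprime to $p$, and $m$ is odd. For all families except the Suzuki and Ree groups one also has $3\mid|S|$, so the least prime divisor of $m$ is at least $5$; in every case it is at least $3$.

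\emph{Counting.} Write $k(X)$ for the number of conjugacy classes of a finite group $X$. By Brauer's permutation lemma the actions of $A$ on $\Irr(S)$ and on $\cl(S)$ afford the same permutation character, so for each $j$ the number of $\sigma^j$-fixed irreducible characters equals the number of $\sigma^j$-invariant classes of $S$; and since $\langle\sigma^j\rangle$ acts coprimely, this in turn equals $k\bigl(C_S(\sigma^j)\bigr)$. A character lies in a non-regular $A$-orbit exactly when it is fixed by some $\sigma^j$ with $1\le j\le m-1$, so the number $N$ of non-regular characters satisfies
$$
N\ \le\ \sum_{j=1}^{m-1}k\bigl(C_S(\sigma^j)\bigr).
$$
Each $C_S(\sigma^j)$ is a group of the same Lie type as $S$ but defined over the subfield fixed by $\sigma^j$: if $\sigma^j$ has order $e$ then that subfield has size $q^{1/e}$, and $e\ge 3$ for every $j\neq 0$. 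There are at least two regular orbits as soon as $k(S)-N\ge 2m$, so it suffices to establish this inequality.

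\emph{The inequality and the main obstacle.} Using standard estimates $k\bigl(C_S(\sigma^j)\bigr)\le c\,q^{r/e}\le c\,q^{r/3}$, where $r$ is the Lie rank, together with a lower bound $k(S)\ge c'q^{r}$, the bound $N\le(m-1)c\,q^{r/3}$ and the elementary fact $m\le f=\log_p q$ give $k(S)-N\ge 2m$ at once for $q$ large. The substantive part is the small cases, which is where I expect the real difficulty. Because $m$ is divisible by a prime that is at least $3$, one has $f\ge 3$ and hence $q\ge p^3$, so $q$ is never tiny; nonetheless, for the rank-one families the crude class-number estimates are too weak, and the bound is genuinely tight there. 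The extremal instance is a Suzuki group: for $S=\mathrm{Sz}(8)$ with $m=3$ one computes $k(S)=11$ and $k\bigl(C_S(\sigma)\bigr)=k(\mathrm{Sz}(2))=5$, and since $m$ is prime the union bound collapses to a single term, giving exactly $N=5$ and $k(S)-N=6=2m$. I would therefore treat the low-rank families ($\PSL_2(q)$, the Suzuki and Ree groups, and a short list of other small cases) by replacing the crude estimates with the exact formulas for $k(S)$ and for $k\bigl(C_S(\sigma^j)\bigr)$ over the fixed subfield; away from these the factor-of-$e$ field reduction leaves a comfortable gap between $k(S)\sim q^{r}$ and the total fixed-character count, so each remaining family is settled by a direct estimate.
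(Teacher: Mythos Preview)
The paper does not prove this lemma: its entire proof is the citation ``This is \cite[Proposition 2.6]{MOTIEP}.'' So there is no argument in the paper to compare against; you are effectively reconstructing the Moret\'o--Tiep proof from scratch.

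Your outline is the right one and matches the standard attack: reduce via CFSG to a cyclic group of field automorphisms on a group of Lie type, use Brauer's permutation lemma together with the coprime-action fact that $\sigma^j$-invariant classes biject with classes of $C_S(\sigma^j)$, and then beat the fixed-character count with class-number asymptotics. Your identification of $\mathrm{Sz}(8)$ with $m=3$ as the extremal case (exactly two regular orbits) is correct and is precisely the example showing the lemma is sharp. A small correction: among the Suzuki and Ree families only the Suzuki groups ${}^2B_2(q)$ have order coprime to $3$; both Ree families ${}^2G_2(q)$ and ${}^2F_4(q)$ have $3\mid|S|$, so your ``at least $5$'' bound on the least prime divisor of $m$ in fact holds everywhere except the Suzuki groups.

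As you yourself flag, what you have written is a programme rather than a proof: the asymptotic inequality $k(S)-(m-1)\,c\,q^{r/e}\ge 2m$ is soft, and the genuine content is pinning down the constants and disposing of the low-rank families one by one (and being careful that $C_S(\sigma^j)$ need not literally be the simple subfield group, only close to it up to centre issues). None of this is wrong, but in its current form it is a sketch; to stand alone it needs either the explicit case analysis or the citation the paper uses.
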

\begin{proof}
This is ~\cite[Proposition 2.6]{MOTIEP}.
\end{proof}


\begin{theorem}\label{chardegsimple}
Let $G$ be a non-abelian finite simple group. Then $|cd(G)| \geq 4$,
\end{theorem}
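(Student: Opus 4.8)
The plan is to argue through the classification of finite simple groups, reducing the statement to the three families of non-abelian simple groups and producing four \emph{pairwise distinct} irreducible character degrees in each case. Producing four distinct degrees is much easier than determining $cd(G)$ completely, which is all that is needed. The bound is sharp: $A_5 \cong \mathrm{PSL}(2,4) \cong \mathrm{PSL}(2,5)$ has $cd(A_5)=\{1,3,4,5\}$, so the target is exactly a lower bound of $4$, and several $\mathrm{PSL}(2,q)$ attain it.

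The sporadic groups (together with the Tits group) form a finite list, and a glance at the \textsc{Atlas} shows each has far more than four distinct degrees, so this case is immediate by inspection. For the alternating groups I would dispose of $A_5$ and $A_6$ by hand ($cd(A_5)=\{1,3,4,5\}$ and $cd(A_6)=\{1,5,8,9,10\}$). For $n\geq 7$ I would restrict to $A_n$ the irreducible characters of $S_n$ labelled by the partitions $(n)$, $(n-1,1)$, $(n-2,2)$ and $(n-2,1,1)$; none of these is self-conjugate for $n\geq 7$, so each restricts irreducibly to $A_n$, and their degrees $1$, $n-1$, $\tfrac{n(n-3)}{2}$ and $\binom{n-1}{2}$ are pairwise distinct (the last two differ by exactly $1$, and both exceed $n-1$ for $n\geq 7$). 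Hence $|cd(A_n)|\geq 4$.

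The groups of Lie type are the main case. Every such group $G$ in defining characteristic $p$ carries the trivial character of degree $1$ and the Steinberg character of degree $|G|_p=q^N$, where $N$ is the number of positive roots. For groups of relative rank at least $2$ the Weyl group has an irreducible that is neither trivial nor the sign, which yields a reflection-type unipotent character whose degree is a polynomial in $q$ of degree strictly between $0$ and $N$, hence distinct from both $1$ and $q^N$; a single regular semisimple Deligne--Lusztig character then supplies a fourth value, so $|cd(G)|\geq 4$ uniformly in this range. The rank-one groups --- $\mathrm{PSL}(2,q)$, the unitary groups $\mathrm{PSU}(3,q)$, the Suzuki groups $\mathrm{Sz}(q)$ and the Ree groups ${}^2G_2(q)$ --- have too few unipotent characters for this device to apply, so here I would read off the explicitly known character tables: for $\mathrm{PSL}(2,q)$ one finds the degrees $1,q-1,q,q+1$ when $q$ is even and $1,(q\pm 1)/2,q$ together with one of $q\pm 1$ when $q$ is odd, giving at least four in every case, and the Suzuki, Ree and unitary families are handled the same way.

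The main obstacle is precisely this uniform treatment of the rank-one groups of Lie type, where the count sits exactly at the sharp value $4$ (for instance $\mathrm{PSL}(2,q)$ with $q$ even) and one must check that the exhibited degrees are genuinely distinct rather than accidentally coinciding for small $q$; this forces a careful, family-by-family reading of the character tables rather than a single generic estimate, and the analogous distinctness check is needed for the fourth (semisimple) degree in the higher-rank case. Equivalently, the result may be phrased as \emph{a non-solvable group has at least four character degrees}, and I would mention that either formulation ultimately rests on the classification.
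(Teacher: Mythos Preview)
Your sketch is a correct outline of a direct proof via the classification of finite simple groups, and the ingredients you list (Atlas inspection for sporadics, explicit hook-type partitions for $A_n$, Steinberg plus a principal-series unipotent plus a semisimple Deligne--Lusztig degree for Lie type of rank~$\geq 2$, and tabulated degrees for the rank-one families) do produce four distinct degrees in each case, with the distinctness checks you flag being genuine but routine. The paper, however, does not argue any of this: it simply quotes the statement as Theorem~C of Malle--Moret\'o, \emph{Nonsolvable groups with few character degrees}, J.~Algebra \textbf{294} (2005), 117--126, where precisely this CFSG case analysis is carried out in full. So your approach is not so much different from the paper's as it is a reconstruction of the cited reference; what you gain is a self-contained argument, what the paper gains is brevity and the ability to point to a place where the rank-one and small-$q$ coincidence checks you mention are already written out carefully.
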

\begin{proof}
This is Theorem C of ~\cite{MalleMoreto}.
\end{proof}


\begin{prop}\label{prop3}
Let $S$ be a nonabelian simple group, and let $p$ be a prime such that $p \geq 3$ and $p$ does not divide $|S|$. Suppose $V = S_1 \times \cdots \times S_n$ where $S_i \cong S$. Assume $G$ is a $p$-solvable group that acts faithfully on $V$ via automorphisms, and assume the action of $G$ transitively permutes the $S_i$'s.

\begin{enumerate}

\item Assume $p \geq 5$, then there exists $N \nor G$,  $N \subseteq \bF_2(G)$ and there exist four $G$-orbits with representatives $v_1$, $v_2$, $v_3$ and $v_4 \in \Irr(V)$ such that for any $P \in \Syl_p(G)$, we have $\bC_P(v_i) \subseteq N$ for $1 \leq i \leq 4$. Moreover, the Sylow $p$-subgroup of $N \bF(G)/\bF(G)$ is abelian. 

\item Assume $p = 3$, then there exists $N \nor G$,  $N \subseteq \bF_3(G)$ and there exist four $G$-orbits with representatives $v_1$, $v_2$, $v_3$ and $v_4 \in \Irr(V)$ such that for any $P \in \Syl_p(G)$, we have $\bC_P(v_i) \subseteq N$ for $1 \leq i \leq 4$. Moreover, the Sylow $p$-subgroup of $N \bF_2(G)/\bF_2(G)$ is abelian.
\end{enumerate}

\end{prop}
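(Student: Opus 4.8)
The plan is to work inside the wreath embedding $G \hookrightarrow \Aut(V)=\Aut(S)\wr\fS_n$. Write $B=\Aut(S)^n$ for the base group, let $\pi\colon \Aut(S)\wr\fS_n\to\fS_n$ be the projection, and set $\bar G=\pi(G)$ (a transitive $p$-solvable subgroup of $\fS_n$) and $K=G\cap B$. Under the identification $\Irr(V)=\Irr(S_1)\times\cdots\times\Irr(S_n)$, for $g=(a_1,\dots,a_n;\sigma)\in G$ the condition $g\cdot v=v$ on $v=(\chi_1,\dots,\chi_n)$ reads $a_j\cdot\chi_j=\chi_{\sigma(j)}$ for all $j$. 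Fixing $P\in\Syl_p(G)$, the goal is to choose $v_1,\dots,v_4$ so that each $\bC_P(v_i)$ is forced into a prescribed normal subgroup $N\nor G$ lying in $\bF_2(G)$ (resp.\ $\bF_3(G)$).

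First I would eliminate the base contribution. The projections $A_j\le\Aut(S_j)$ of $P\cap B$ act faithfully and, since $p\nmid|S|$, coprimely on $S_j$, so by Lemma \ref{simplecoprime} each $A_j$ has at least two regular orbits on $\Irr(S_j)$. Choosing every $\chi_j$ inside a regular $A_j$-orbit forces $a_j=1$ whenever $\sigma$ fixes $j$ and, crucially, makes $\bC_P(v)\cap B=1$; hence $\pi$ embeds $\bC_P(v)$ into $\bar P=\pi(P)$. Passing to degrees in $a_j\cdot\chi_j=\chi_{\sigma(j)}$ shows that $\pi(g)$ preserves the degree pattern $c\colon j\mapsto\chi_j(1)$, so $\pi(\bC_P(v))\le\Stab_{\bar P}(c)$. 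Because $|cd(S)|\ge 4$ by Theorem \ref{chardegsimple}, the two regular orbits together with the four available degrees give a palette rich enough to prescribe $c$; I would choose $c$ so that $\Stab_{\bar P}(c)$ is as small as the permutation structure allows.

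It then remains to show that for such a choice $\bC_P(v)$ actually lands in the right Fitting layer and has the stated abelianity. I would track the Fitting series through $\pi$, using that $\pi$ carries nilpotent normal subgroups of $G$ into $\bF(\bar G)$ and metanilpotent ones into a metanilpotent normal subgroup of $\bar G$, and analyze separately the automorphism contribution coming from $K$. For $p\ge5$ the Sylow $p$-subgroup of $\Out(S)$ is cyclic -- only field automorphisms occur -- so the base contributes an abelian $p$-part and the residual permutation stabilizer sits a single Fitting layer above $\bF(G)$; this yields $N\subseteq\bF_2(G)$ with $N\bF(G)/\bF(G)$ abelian. For $p=3$ the group $\Out(S)$ may have a nonabelian Sylow $3$-subgroup (triality on $\mathrm{P}\Omega_8^{+}(q)$ combined with a field automorphism), which consumes one extra layer and forces the weaker $N\subseteq\bF_3(G)$ with $N\bF_2(G)/\bF_2(G)$ abelian. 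Finally, running the construction with four distinct degree-types from $cd(S)$, together with the spare regular orbit from Lemma \ref{simplecoprime}, produces four representatives in four different $G$-orbits, each satisfying $\bC_P(v_i)\subseteq N$; conjugacy of Sylow subgroups then gives the conclusion for every $P\in\Syl_p(G)$.

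The hard part will be the combinatorial control of the permutation residual: proving that a bounded palette of regular, degree-distinguished characters drives $\Stab_{\bar P}(c)$ into the second (resp.\ third) Fitting layer with abelian Sylow $p$ modulo the preceding layer, uniformly in $n$. This is a distinguishing-coloring problem for $p$-subgroups of $\fS_n$ in which one must follow how the iterated wreath structure of $\bar P$ interacts with the systems of imprimitivity of $\bar G$, while transferring this information back from $\bar G$ to $G$ through the base group $K$ and bookkeeping the field- (and, for $p=3$, graph-) automorphism part it contributes. The $p=3$ case is the most delicate, since there the automorphism side is genuinely nonabelian and must be separated cleanly from the permutation side to identify the correct Fitting layer.
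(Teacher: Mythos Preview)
Your wreath-product setup and the identification of $K=G\cap\Out(S)^n$, $\bar G=G/K\le\fS_n$ match the paper, and your reading of the $p$-structure of $\Out(S)$ (cyclic Sylow $p$-subgroup for $p\ge5$, the triality obstruction for $p=3$) is exactly what the paper encodes via the normal series $A\nor B\nor C$ of $\Out(S)$ with $A$ abelian, $B/A$ cyclic, $C/B\in\{1,\fS_2,\fS_3\}$.

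Where you diverge is in which end of the wreath product you attack. You propose to kill the base first via regular orbits from Lemma~\ref{simplecoprime}, and then to control the permutation residual $\Stab_{\bar P}(c)$; you correctly flag this last step as the hard part and leave it open. The paper does the opposite, and thereby sidesteps your hard part entirely. For $G/K$ primitive with $p\mid|G/K|$ and $n\ge5$, it invokes Dolfi \cite[Lemma~1(b)]{SDperm}: since $G/K$ is $p$-solvable and $\Alt(n)\not\le G/K$, there is a partition $\{1,\dots,n\}=\Omega_1\cup\Omega_2\cup\Omega_3\cup\Omega_4$ with not all $|\Omega_i|$ equal whose setwise stabiliser in $G/K$ is a $2$-group. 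Placing the four characters $\theta,\lambda,\chi,\psi$ of distinct degrees from Theorem~\ref{chardegsimple} constantly on the four blocks, in four carefully chosen arrangements, yields $v_1,\dots,v_4\in\Irr(V)$ of pairwise distinct total degree (hence in distinct $G$-orbits) whose stabilisers in $G/K$ are $2$-groups; as $p$ is odd this forces $\bC_P(v_i)\subseteq K$. The cases $p\nmid|G/K|$ and $(p,n)\in\{(3,3),(3,4)\}$ are done by hand, and the imprimitive case by induction through a block system. Once $\bC_P(v_i)\subseteq K\le\Out(S)^n$, the series $A^n\nor B^n\nor\Out(S)^n$ gives the Fitting-layer and abelianity statements directly. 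In particular Lemma~\ref{simplecoprime} is not used in this proposition at all (it appears only later, in the proof of Theorem~A), and your distinguishing-colouring problem for $p$-subgroups of $\fS_n$ never needs to be solved. Note also that your plan requires the $\chi_j$ to lie simultaneously in regular $A_j$-orbits \emph{and} to realise four prescribed degrees; nothing in Lemma~\ref{simplecoprime} or Theorem~\ref{chardegsimple} guarantees these conditions are compatible, so even the base-elimination step is not secured as stated.
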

\begin{proof}
 Clearly $G$ is embedded in $\Out(V) = \Out(S) \wr S_n$. Set $K := G \cap \Out(S)^n$, and thus $G/K$ is a permutation group on $n$ letters.

 Assume $n=1$, then the result follows from Theorem ~\ref{chardegsimple} and the fact that $\Out(S)$ has a normal series of the form $A \nor B \nor C$ where $A$ is abelian, $B/A$ is cyclic and $C/B \cong 1, S_2$ or $S_3$.

 Assume $n>1$, and we first assume that $G/K$ is primitive. Since $S$ has four characters of different degrees by Theorem ~\ref{chardegsimple}, we may denote them to be $\theta, \lambda, \chi$ and $\psi$ and we may assume that $\theta(1) > \lambda(1) > \chi(1) > \psi(1)$.

 Assume $p \nmid |G/K|$, we may choose $v_1=\theta^n, v_2=\lambda^n, v_3=\chi^n$ and $v_4=\psi^n$. It is Clear that $\theta(1)^n > \lambda(1)^n > \chi(1)^n > \psi(1)^n$, and $\Syl_p(\bC_G(v_i)) \subseteq \Out(S)^n$ for $1 \leq i \leq 4$. Since $\Out(S)$ has a normal series of the form $A \nor B \nor C$ where $A$ is abelian, $B/A$ is cyclic and $C/B \cong 1, S_2$ or $S_3$, the result is clear.

 Assume $p \mid |G/K|$ and $n \geq 5$, then since $G/K$ is $p$-solvable, we know that $\Alt(n) \not \leq G/K$. By \cite[Lemma 1]{SDperm}(b), we can find a partition $\Omega=\Omega_1 \cup \Omega_2 \cup \Omega_3 \cup \Omega_4$ such that $\Stab_{G/K}(\Omega_1) \cap \Stab_{G/K}(\Omega_2) \cap \Stab_{G/K}(\Omega_3) \cap \Stab_{G/K}(\Omega_4)$ is a $2$-group and $t_1$,$t_2$,$t_3$ and $t_4$ are not all the same. We denote $t_i=|\Omega_i|$, $1 \leq i \leq 4$. By re-indexing, we may assume that $t_1 \geq t_2 \geq t_3 \geq t_4$.

 Since we know that not all the $t_i$s are the same, WLOG, we must have one of the followings,
 \begin{enumerate}
 \item $t_1 > t_2 \geq t_3 \geq t_4$. In this case, we construct four irreducible characters
  \begin{enumerate}
   \item $\alpha=\prod_{i=1}^{n} \alpha_i$, where $\alpha_i=\theta_i$ if $i \in \Omega_1$, $\alpha_i=\lambda_i$ if $i \in \Omega_2$, $\alpha_i=\chi_i$ if $i \in \Omega_3$, $\alpha_i=\psi_i$ if $i \in \Omega_4$. 
   \item $\beta=\prod_{i=1}^{n} \beta_i$, where $\beta_i=\lambda_i$ if $i \in \Omega_1$, $\beta_i=\theta_i$ if $i \in \Omega_2$, $\beta_i=\chi_i$ if $i \in \Omega_3$, $\beta_i=\psi_i$ if $i \in \Omega_4$. 
   \item $\gamma=\prod_{i=1}^{n} \gamma_i$, where $\gamma_i=\chi_i$ if $i \in \Omega_1$, $ \gamma_i=\theta_i$ if $i \in \Omega_2$, $ \gamma_i=\lambda_i$ if $i \in \Omega_3$, $ \gamma_i=\psi_i$ if $i \in \Omega_4$. 
   \item $\delta=\prod_{i=1}^{n} \delta_i$, where $\delta_i=\psi_i$ if $i \in \Omega_1$, $\delta_i=\theta_i$ if $i \in \Omega_2$, $\delta_i=\lambda_i$ if $i \in \Omega_3$, $\delta_i=\chi_i$ if $i \in \Omega_4$. 
  \end{enumerate}
 Those four characters have different degrees since

 $\alpha(1)=\theta(1)^{t_1} \lambda(1)^{t_2} \chi(1)^{t_3} \psi(1)^{t_4} > \beta(1)=\lambda(1)^{t_1} \theta(1)^{t_2} \chi(1)^{t_3} \psi(1)^{t_4} >$\\
 $\gamma(1)= \chi(1)^{t_1} \theta(1)^{t_2} \lambda(1)^{t_3} \psi(1)^{t_4}> \delta(1)=\psi(1)^{t_1} \theta(1)^{t_2} \lambda(1)^{t_3} \chi(1)^{t_4}$.

 \item $t_1=t_2>t_3 \geq t_4$. In this case, we construct four irreducible characters
  \begin{enumerate}
   \item $\alpha=\prod_{i=1}^{n} \alpha_i$, where $\alpha_i=\theta_i$ if $i \in \Omega_1$, $\alpha_i=\lambda_i$ if $i \in \Omega_2$, $\alpha_i=\chi_i$ if $i \in \Omega_3$, $\alpha_i=\psi_i$ if $i \in \Omega_4$. 
   \item $\beta=\prod_{i=1}^{n} \beta_i$, where $\beta_i=\theta_i$ if $i \in \Omega_1$, $\beta_i=\chi_i$ if $i \in \Omega_2$, $\beta_i=\lambda_i$ if $i \in \Omega_3$, $\beta_i=\psi_i$ if $i \in \Omega_4$. 
   \item $\gamma=\prod_{i=1}^{n} \gamma_i$, where $\gamma_i=\theta_i$ if $i \in \Omega_1$, $\gamma_i=\psi_i$ if $i \in \Omega_2$, $\gamma_i=\lambda_i$ if $i \in \Omega_3$, $\gamma_i=\chi_i$ if $i \in \Omega_4$. 
   \item $\delta=\prod_{i=1}^{n} \delta_i$, where $\delta_i=\chi_i$ if $i \in \Omega_1$, $\delta_i=\psi_i$ if $i \in \Omega_2$, $\delta_i=\lambda_i$ if $i \in \Omega_3$, $\delta_i=\theta_i$ if $i \in \Omega_4$. 
  \end{enumerate}
 Those four characters have different degrees since

 $\alpha(1)=\theta(1)^{t_1} \lambda(1)^{t_2} \chi(1)^{t_3} \psi(1)^{t_4} > \beta(1)=\theta(1)^{t_1} \chi(1)^{t_2} \lambda(1)^{t_3} \psi(1)^{t_4} >$\\
 $\gamma(1)=\theta(1)^{t_1} \psi(1)^{t_2} \lambda(1)^{t_3} \chi(1)^{t_4} > \delta(1)= \chi(1)^{t_1} \psi(1)^{t_2} \lambda(1)^{t_3} \theta(1)^{t_4}$.

 \item $t_1=t_2=t_3 > t_4$. In this case, we construct four irreducible characters
   \begin{enumerate}
     \item $\alpha=\prod_{i=1}^{n} \alpha_i$, where $\alpha_i=\theta_i$ if $i \in \Omega_1$, $\alpha_i=\lambda_i$ if $i \in \Omega_2$, $\alpha_i=\chi_i$ if $i \in \Omega_3$, $\alpha_i=\psi_i$ if $i \in \Omega_4$. 
     \item $\beta=\prod_{i=1}^{n} \beta_i$, where $\beta_i=\theta_i$ if $i \in \Omega_1$, $\beta_i=\lambda_i$ if $i \in \Omega_2$, $\beta_i=\psi_i$ if $i \in \Omega_3$, $\beta_i=\chi_i$ if $i \in \Omega_4$. 
     \item $\gamma=\prod_{i=1}^{n} \gamma_i$, where $\gamma_i=\theta_i$ if $i \in \Omega_1$, $\gamma_i=\chi_i$ if $i \in \Omega_2$, $\gamma_i=\psi_i$ if $i \in \Omega_3$, $\gamma_i=\lambda_i$ if $i \in \Omega_4$. 
     \item $\delta=\prod_{i=1}^{n} \delta_i$, where $\delta_i=\lambda_i$ if $i \in \Omega_1$, $\delta_i=\chi_i$ if $i \in \Omega_2$, $\delta_i=\psi_i$ if $i \in \Omega_3$, $\delta_i=\theta_i$ if $i \in \Omega_4$. 
   \end{enumerate}
 Those four characters have different degrees since

$\alpha(1)=\theta(1)^{t_1} \lambda(1)^{t_2} \chi(1)^{t_3} \psi(1)^{t_4}> \beta(1)=\theta(1)^{t_1} \lambda(1)^{t_2} \psi(1)^{t_3} \chi(1)^{t_4}>$ \\
$\gamma(1)=\theta(1)^{t_1} \chi(1)^{t_2} \psi(1)^{t_3} \lambda(1)^{t_4} > \delta(1)= \lambda(1)^{t_1} \chi(1)^{t_2} \psi(1)^{t_3} \theta(1)^{t_4}$.

\end{enumerate}

Assume $p=3$, $p \mid |G/K|$ and $n=4$. In this case, we construct four irreducible characters $\alpha=\theta \lambda  \chi \psi$, $\beta=\theta  \lambda  \psi \psi$, $\gamma=\theta \chi \psi \psi$ and $\delta=\lambda \chi \psi \psi$.

 Those four characters have different degrees since $\alpha(1)=\theta(1) \lambda(1) \chi(1) \psi(1)> \beta(1)=\theta(1)  \lambda(1)  \psi^2(1) > \gamma(1)=\theta(1) \chi(1) \psi^2(1) > \delta(1)= \lambda(1) \chi(1) \psi^2(1)$.

 It is clear that $\bC_{G/K}(\alpha)=\bC_{G/K}(\beta)=\bC_{G/K}(\gamma)=\bC_{G/K}(\delta)=1$.

 Assume $p=3$, $p \mid |G/K|$ and $n=3$. In this case, we construct four irreducible characters, $\alpha=\theta \lambda  \chi$, $\beta=\theta  \lambda  \psi$, $\gamma=\theta \chi \psi$ and $\delta=\lambda \chi \psi$.

 Those four characters have different degrees since $\alpha(1)=\theta(1) \lambda(1)  \chi(1)> \beta(1)=\theta(1)  \lambda(1)  \psi(1)> \gamma(1)=\theta(1) \chi(1) \psi(1) > \delta(1)= \lambda(1) \chi(1) \psi(1)$.

 It is clear that $\bC_{G/K}(\alpha),\bC_{G/K}(\beta),\bC_{G/K}(\gamma),\bC_{G/K}(\delta)$ is a $2$-group.

Thus, we may always find four irreducible characters $v_1=\alpha$, $v_2=\beta$, $v_3=\gamma$ and $v_4=\delta$ in $\Irr(V)$ of different degrees such that for any $P \in \Syl_p(G)$, we have $\bC_G(v_i) \subseteq \Out(S)^n$ for $1 \leq i \leq 4$. Since $\Out(S)$ has a normal series of the form $A \nor B \nor C$ where $A$ is abelian, $B/A$ is cyclic and $C/B \cong 1, S_2$ or $S_3$, the result is clear.


Assume $n>1$, and we assume that $G/K$ is not primitive. The argument follows by induction.
\end{proof}

\begin{prop}\label{prop4}
Let $G$ be a finite $p$-solvable group where $O_{\infty}(G)=1$. Then $F^*(G)=E_1 \times \dots \times E_m$ is a product of $m$ finite non-abelian simple groups $E_i$, $1 \leq i \leq m$ permuted by $G$. Let $L=\bigcap_i \bN_G(E_i)$. Clearly $L/F^*(G) \leq \Out E_1 \times \dots \times \Out E_m$, and we denote $\bar G=G/F^*(G)$. 
\begin{enumerate}
\item Assume $p \ge 5$, then there exists $N \nor \bar G$ where $N \subseteq \bF_2(\bar G)$, and there exists $v \in \Irr(F^*(G))$ such that for any $P \in \Syl_p(\bar G)$, we have $\bC_P(v) \subseteq N$. Moreover, the Sylow $p$-subgroup of $N \bF(\bar G)/\bF(\bar G)$ is abelian.

\item Assume $p = 3$, then there exists $N \nor \bar G$ where $N \subseteq \bF_3(\bar G)$, and there exists $v \in \Irr(F^*(G))$ such that for any $P \in \Syl_p(\bar G)$, we have $\bC_P(v) \subseteq N$. Moreover, the Sylow $p$-subgroup of $N \bF_2(\bar G)/\bF_2(\bar G)$ is abelian.
\end{enumerate}
\end{prop}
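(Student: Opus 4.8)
The plan is to reduce to Proposition~\ref{prop3} by splitting $F^*(G)$ into its $G$-orbits of simple factors and then reassembling the local data along a subdirect product. First I would record the structure. Since $O_{\infty}(G)=1$ there is no nontrivial solvable normal subgroup, so $\bF(G)=1$ and hence $F^*(G)=E(G)=E_1\times\cdots\times E_m$ is a direct product of nonabelian simple groups with $Z(F^*(G))=1$; consequently $\bC_G(F^*(G))=1$, $G$ embeds in $\Aut(F^*(G))$, and $\bar G=G/F^*(G)$ embeds in $\Out(F^*(G))$. The key structural point is that, because $G$ is $p$-solvable, every nonabelian composition factor of $G$ is a $p'$-group, so $p\nmid|E_i|$ for every $i$; thus each simple factor satisfies exactly the hypothesis $p\nmid|S|$ of Proposition~\ref{prop3}. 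Grouping the $E_i$ into $G$-orbits gives a $G$-invariant decomposition $F^*(G)=V_1\times\cdots\times V_r$ with $V_j\cong S_j^{\,n_j}$, and $\bar G$ permutes the $n_j$ factors of each $V_j$ transitively. Writing $\bar G_j$ for the image of $\bar G$ in $\Out(V_j)=\Out(S_j)\wr S_{n_j}$, the homomorphism $\bar G\to\prod_j\bar G_j$ is injective, since any element of its kernel induces an inner automorphism on each $V_j$ and hence on $F^*(G)$; so $\bar G$ is a subdirect product of the $p$-solvable groups $\bar G_j$.

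Next I would apply Proposition~\ref{prop3} to each $\bar G_j$ acting on $\Irr(V_j)$, which is legitimate as $p\nmid|S_j|$ and $\bar G_j$ is $p$-solvable and transitive on the factors of $V_j$. This yields, for each $j$, a character $v^{(j)}\in\Irr(V_j)$ and a normal subgroup $N_j\nor\bar G_j$ with $N_j\subseteq\bF_2(\bar G_j)$ (resp. $\bF_3(\bar G_j)$ when $p=3$) such that $\bC_{P_j}(v^{(j)})\subseteq N_j$ for every $P_j\in\Syl_p(\bar G_j)$ and the Sylow $p$-subgroup of $N_j\bF(\bar G_j)/\bF(\bar G_j)$ (resp. $N_j\bF_2(\bar G_j)/\bF_2(\bar G_j)$) is abelian. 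I then set $v=v^{(1)}\otimes\cdots\otimes v^{(r)}\in\Irr(F^*(G))$ and $N=\bar G\cap\prod_j N_j$. Since each $N_j\nor\bar G_j$, we get $N\nor\bar G$. Given $P\in\Syl_p(\bar G)$ and $g\in\bC_P(v)$, the image $g_j$ of $g$ in $\bar G_j$ is a $p$-element fixing the tensor factor $v^{(j)}$ (as $\bar G$ preserves each $V_j$), so $g_j$ lies in $\bC_{P_j}(v^{(j)})\subseteq N_j$ for a suitable $P_j\in\Syl_p(\bar G_j)$; hence $g\in N$ and $\bC_P(v)\subseteq N$.

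It remains to place $N$ in the Fitting series and to verify the abelianness clause, which I would accomplish with one elementary lemma: if $M\nor\bar G$ has Fitting height at most $k$, then $M\subseteq\bF_k(\bar G)$. (Induct on $k$: $\bF(M)$ is characteristic in $M$, hence a nilpotent normal subgroup of $\bar G$, and so lies in $\bF(\bar G)$; then $M\bF(\bar G)/\bF(\bar G)$ is a normal subgroup of $\bar G/\bF(\bar G)$ of Fitting height at most $k-1$, and induction gives $M\subseteq\bF_k(\bar G)$.) Now $\prod_j\bF_2(\bar G_j)$ has Fitting height at most $2$, so $N\subseteq\bar G\cap\prod_j\bF_2(\bar G_j)$ is a normal subgroup of $\bar G$ of Fitting height at most $2$, whence $N\subseteq\bF_2(\bar G)$; the case $p=3$ is identical with $\bF_3$. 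For the abelianness clause, let $Q$ be a Sylow $p$-subgroup of $N$. Its image in each $\bar G_j$ is a $p$-subgroup of $N_j$, hence abelian modulo $\bF(\bar G_j)$, so $[Q,Q]$ projects into $\bF(\bar G_j)$ for every $j$; thus $[Q,Q]\subseteq\bar G\cap\prod_j\bF(\bar G_j)$, which is nilpotent and normal in $\bar G$ and so lies in $\bF(\bar G)$. Therefore $Q\bF(\bar G)/\bF(\bar G)$ is abelian, and since it is a Sylow $p$-subgroup of $N\bF(\bar G)/\bF(\bar G)$, the latter has abelian Sylow $p$-subgroup; again $p=3$ is the same with $\bF$ replaced by $\bF_2$.

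The step demanding the most care is this final reassembly, not any single orbit. Because $\bar G$ is only a subdirect product of the $\bar G_j$ and need not itself be solvable, one cannot simply transport the conclusions of Proposition~\ref{prop3} factor by factor; the content lies in checking that the single subgroup $N$, defined globally inside $\bar G$, simultaneously inherits the correct Fitting level and the abelian-Sylow property with respect to $\bar G$'s own Fitting series rather than those of the $\bar G_j$. The elementary lemma above is exactly what bridges this gap, and writing out that the commutator and Fitting-height estimates pass cleanly through the subdirect embedding is the one place I would be most careful.
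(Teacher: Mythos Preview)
Your proposal is correct and follows essentially the same strategy as the paper: decompose $F^*(G)$ into $G$-orbits $V_j$, realize $\bar G$ as a subdirect product of the images $\bar G_j$ in $\Out(V_j)$, apply Proposition~\ref{prop3} on each orbit, and set $v=\bigotimes_j v^{(j)}$ and $N=\bar G\cap\prod_j N_j$. The paper's proof is in fact terser at the reassembly step (it simply asserts ``Clearly $N\bF(K)/\bF(K)\subseteq\prod N_i\bF(K_i)/\bF(K_i)$ and the result follows''), whereas you make explicit both the Fitting-height lemma placing $N$ inside $\bF_2(\bar G)$ and the commutator argument showing the Sylow $p$-subgroup of $N\bF(\bar G)/\bF(\bar G)$ is abelian; you also spell out why $p\nmid|E_i|$, which is needed to invoke Proposition~\ref{prop3}.
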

\begin{proof}


Consider $K=G/F^*(G)$, $K$ acts faithfully on $F^*(G)$. Next, we group the simple groups in the direct product of $F^*(G)$ where $K$ acts transitively. We denote $F^*(G)=L_1 \times \cdots \times L_s$, $L_j=E_{j1} \times E_{j2} \times \cdots \times E_{jm_j}$, $1 \leq j \leq s$ where $K$ transitively permutes the simples groups inside the direct product of $L_j$. Clearly $E_{j1} \cong E_{j2} \cdots \cong E_{jm_j}$.

We see that $K$ can be embedded as a subgroup of $K/\bC_K(L_1) \times \cdots \times K/\bC_K(L_s)$ and we denote $K_i=K/\bC_K(L_i)$, $C_i=\bC_K(L_i)$.


If $p \geq 5$, by applying Proposition \ref{prop3}(1) to the action of $K_i$ on $L_i$, there exists $v_{i} \in \Irr(L_i)$, and $N_i \nor K_i$ such that for any $P_i \in \Syl_{p}(K_i)$, $\bC_{P_i}(v_{i}) \subseteq N_i \subseteq \bF_2(K_i)$. Also the Sylow $p$-subgroup of $N_i \bF(K_i)/\bF(K_i)$ is abelian. 

Let $v=\sum v_{i}$ and $N= K \cap (\prod N_i)$. Let $P \in \Syl_{p}(G)$ and $P_i=P C_i/C_i$. $\bC_P(v) \subseteq \prod \bC_{P_i}(v_{i}) \subseteq \prod N_i$. Clearly $N \bF(K)/\bF(K) \subseteq \prod N_i \bF(K_i)/\bF(K_i)$ and the result follows.

If $p=3$, by applying Proposition \ref{prop3}(2) to the action of $K_i$ on $L_i$, there exists $v_{i} \in \Irr(L_i)$, and $N_i \nor K_i$ such that for any $P_i \in \Syl_{p}(K_i)$, $\bC_{P_i}(v_{i}) \subseteq N_i \subseteq \bF_3(K_i)$. Also the Sylow $p$-subgroup of $N_i \bF_2(K_i)/\bF_2(K_i)$ is abelian. 

Let $v=\sum v_{i}$ and $N= K \cap (\prod N_i)$. Let $P \in \Syl_{p}(G)$ and $P_i=P C_i/C_i$. $\bC_P(v) \subseteq \prod \bC_{P_i}(v_{i}) \subseteq \prod N_i$. Clearly $N \bF_3(K)/\bF_3(K) \subseteq \prod N_i \bF_3(K_i)/\bF_3(K_i)$ and the result follows.

\end{proof}

\begin{thmA} \label{sdefect}
Let $p$ be a prime such that $p \geq 5$. Let $G$ be a finite $p$-solvable group such that $O_{\infty}(G)=1$, and we denote $|G|_p=p^n$. Then $G$ contains a $p$-block $B$ such that $d(B) \leq \lfloor \frac {2n} {3} \rfloor $.
\end{thmA}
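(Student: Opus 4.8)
The plan is to exhibit a block whose defect group has order at most $p^{\lfloor 2n/3\rfloor}$, by producing a normal $p'$-subgroup $M\nor G$ and a character $\lambda\in\Irr(M)$ whose $P$-stabilizer (for $P\in\Syl_p(G)$) is correspondingly small, and then reading off the defect from the block theory of normal subgroups. First I would record the structural input. Since $\bF(G)\subseteq O_{\infty}(G)=1$ we have $F^*(G)=E_1\times\cdots\times E_m$ with $\bC_G(F^*(G))=1$, and because a $p$-solvable group has no nonabelian composition factor of order divisible by $p$, each $E_i$, and hence $F^*(G)$, is a $p'$-group. Thus $F^*(G)$ is a normal $p'$-subgroup, a Sylow $p$-subgroup $P$ of $G$ embeds isomorphically into $\bar G=G/F^*(G)$, and $|\bar G|_p=p^n$; I identify the action of $P$ on $F^*(G)$ with that of its image in $\bar G$.

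The block-theoretic reduction I would use is Fong's theory of blocks over a normal $p'$-subgroup. If $M\nor G$ is a $p'$-group, $\lambda\in\Irr(M)$, and $T=I_G(\lambda)$, then $P\cap T=\bC_P(\lambda)$ is a Sylow $p$-subgroup of $T$ (the index $|T:P\cap T|=|TP|/|P|$ is a $p'$-number since $P$ is a full Sylow subgroup of $G$), and by the Fong--Reynolds correspondence every block of $G$ lying over $\lambda$ has a defect group conjugate to a defect group of its correspondent block of $T$. As all defect groups of $T$ lie inside $\Syl_p(T)$, any such block $B$ satisfies $d(B)\le\log_p|\bC_P(\lambda)|$. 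Hence it suffices to find $M$ and $\lambda$ with $|\bC_P(\lambda)|\le p^{\lfloor 2n/3\rfloor}$; note as a sanity check that $\bC_P(\lambda)=1$ recovers the standard defect-zero statement.

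To construct $\lambda$ I would build it over the layer and then over the remaining $p'$-sections. Applying Proposition~\ref{prop4}(1) (we are in the case $p\ge 5$) to the faithful action of $\bar G$ on $F^*(G)$ yields $v\in\Irr(F^*(G))$ and $N\nor\bar G$ with $N\subseteq\bF_2(\bar G)$, $\bC_P(v)\subseteq N$, and $N\bF(\bar G)/\bF(\bar G)$ of abelian Sylow $p$-subgroup. This already confines the surviving stabilizer $\bC_P(v)$ to the second Fitting layer of $\bar G$, with abelian image modulo $\bF(\bar G)$. I would then take $M=O_{p'}(G)$ and choose, compatibly with $v$, a character $\lambda\in\Irr(O_{p'}(G))$ lying over $v$ so that $\bC_P(\lambda)\subseteq\bC_P(v)$, using Clifford theory along $F^*(G)\nor O_{p'}(G)$ together with the orbit theory of solvable linear groups (in the spirit of the orbit theorem \cite{AE1} underlying the Espuelas--Navarro bound) applied to the action of the residual $p$-group $\bC_P(v)$ on the $p'$-chief factors of $\bar G$ lying below $\bF_2(\bar G)$. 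The abelian-Sylow conclusion of Proposition~\ref{prop4} is precisely what guarantees a sufficiently large such orbit, so that after both reductions the total stabilizer $\bC_P(\lambda)$ has order at most $p^{\lfloor 2n/3\rfloor}$.

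The main obstacle is the numerical bookkeeping that yields the constant $2/3$ rather than $1/2$: one must track, across the chain $F^*(G)\le O_{p'}(G)\le G$, how much of the exponent $n$ is consumed by the $P$-orbit of $v$ and how much by the subsequent orbit inside $\bF_2(\bar G)$, and verify that the irreducible part of the stabilizer that cannot be removed (essentially an $O_p(\bar G)$-type contribution together with the abelian image controlled by Proposition~\ref{prop4}) never exceeds $\lfloor 2n/3\rfloor$ in the worst case, including the degenerate case $O_{p'}(\bar G)=1$ where $\bF(\bar G)=O_p(\bar G)$ and all of the reduction must already come from Proposition~\ref{prop4}. Two subsidiary points also need care: assembling the local choices into a single $\lambda\in\Irr(O_{p'}(G))$ with $\bC_P(\lambda)$ as small as the separate bounds predict (a Clifford-compatibility check ensuring $\bC_P(\lambda)\subseteq\bC_P(v)$ after conjugation), and propagating the reduction to the primitive case and the induction on imprimitive $\bar G/K$ already begun in Proposition~\ref{prop3} through to the block-defect conclusion.
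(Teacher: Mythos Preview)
Your plan diverges from the paper's in a crucial way, and the divergence is exactly where your acknowledged ``main obstacle'' lies. The paper does \emph{not} produce a single $\lambda$ on a normal $p'$-subgroup whose $P$-stabilizer has order $\le p^{\lfloor 2n/3\rfloor}$. Instead it writes $n=n_1+n_2+n_3$ with $p^{n_1}=|N\cap\bF(\bar G)|_p$, $p^{n_2}=|N\bF(\bar G):\bF(\bar G)|_p$, $p^{n_3}=|\bar G:N|_p$, and then constructs \emph{three separate} blocks: one over the $\lambda\in\Irr(F^*(G))$ coming from Proposition~\ref{prop4} (giving defect $\le n-n_3$), one over a character of the preimage of $O_{p'}(\bF(\bar G))$ via Gow's regular orbit theorem (giving defect $\le n-n_2$), and one over a $\nu\in\Irr(F^*(G))$ supplied by Lemma~\ref{simplecoprime} on which the Sylow $p$-subgroup $P_1$ of $N\cap\bF(\bar G)$ acts regularly (giving defect $\le n-n_1$). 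Pigeonhole on $n_1+n_2+n_3=n$ then yields a block of defect $\le\lfloor 2n/3\rfloor$. The constant $2/3$ is thus an artifact of splitting into three pieces, not of any single orbit estimate.

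Your approach of assembling one $\lambda\in\Irr(O_{p'}(G))$ lying over $v$ breaks down at the $n_1$ piece. When $O_{p'}(\bar G)=1$ one has $O_{p'}(G)=F^*(G)$, so there are no further $p'$-sections above $F^*(G)$ to exploit, and you are stuck with whatever $v$ Proposition~\ref{prop4} hands you; but that proposition only asserts $\bC_P(v)\subseteq N$, which can be vacuous (take $N=\bar G=\bF_2(\bar G)$). Even in the general case, the subgroup $P_1\le O_p(\bar G)$ acts trivially on every $p'$-chief factor of $\bar G$ above $F^*(G)$, so no choice of extension $\lambda$ of $v$ can shrink $\bC_P(\lambda)\cap P_1$ beyond $\bC_{P_1}(v)$. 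The paper remedies this by abandoning $v$ and choosing a \emph{different} character $\nu\in\Irr(F^*(G))$ tailored to $P_1$ via Lemma~\ref{simplecoprime}; the two constraints on $\Irr(F^*(G))$ are not shown to be compatible, and the argument does not need them to be. Your Fong--Reynolds reduction is fine and is essentially what underlies the paper's use of \cite[Lemma~V.2.3]{WF}, but the single-character strategy on top of it does not deliver the bound without the three-block decomposition.
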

\begin{proof}

Since $O_{\infty}(G)=1$, $F^*(G)=L_1 \times \dots \times L_m$ is the product of $m$ finite non-abelian simple groups permuted by $G/F^*(G)$. We denote $\bar G=G/F^*(G)$ and let $J=\bigcap_i \bN_G(L_i)$. Clearly $J/F^*(G) \leq \Out L_1 \times \dots \times \Out L_m$. By Proposition ~\ref{prop4}(1), there exists $N \nor \bar G$ where $N \subseteq \bF_2(\bar G)$ and one $\bar G$-orbit with representative $\lambda \in \Irr(F^*(G))$ such that for any $P \in \Syl_p(\bar G)$, we have $\bC_P(v) \subseteq N$. Furthermore, the Sylow $p$-subgroup of $N \bF(\bar G)/\bF(\bar G)$ is abelian. 

 Let $p^n=|\bar G|_p$, $p^{n_1}=|N \cap \bF(\bar G)|_p$, $p^{n_2}=|N \bF(\bar G) :\bF(\bar G)|_p$ and $p^{n_3}=|\bar G:N|_p$. Clearly $n=n_1+n_2+n_3$.

Take $\chi \in \Irr(G)$ lying over $\lambda$ and let $B$ be the $p$-block of $G$ containing $\chi$. As $F^*(G)$ is a $p'$-group, \cite[Lemma V.2.3]{WF} shows that every irreducible character $\psi$ in $B$ has $\lambda$ as an irreducible constituent. Now $\psi(1)_p \geq |\bar G: \bC_{\bar G}(\lambda)|_p  \geq p^{n_3}$.

Let $P/\bF(\bar G)$ be a Sylow $p$-subgroup of $N \bF(\bar G)/\bF(\bar G)$. Where $Y = O_{p'}(\bF(\bar G))$, observe that $W = \Irr(Y/\Phi(Y))$ is a faithful and completely reducible $P/\bF(\bar G)$-module. By Gow's regular orbit theorem ~\cite[2.6]{GOW}, we have $\mu \in W$ such that $\bC_P(\mu) = \bF(\bar G)$. We may view $\mu$ as a character of the preimage $X$ of $Y$ in $\bar G$. Observe that $X$ is a $p'$-group. Take $\chi \in \Irr(\bar G)$ lying over $\mu$, and let $B$ be the $p$-block of $G$ containing $\chi$. \cite[Lemma V.2.3]{WF} shows that every irreducible character $\psi$ in $B$ has $\lambda$ as an irreducible constituent. Now $\chi$ lies over an irreducible character $\psi$ of $P$ lying over $\mu$. Clearly, $\psi(1)_p \ge |N \bF(\bar G)/\bF(\bar G)|_p \ge p^{n_2}$. As $P$ is normal in $G$, we have $\chi(1)_p \ge \psi(1)_p$. 

Let $P_1$ be the Sylow $p$-subgroup of $\bF(\bar G) \cap N$. By Lemma ~\ref{simplecoprime}, we may find $\nu \in \Irr(F^*(G))$ such that $\bC_{P_1}(\nu)=1$. Thus by a similar argument as before, we may find a $p$-block $B$ of $G$ such that for every irreducible character $\psi$ in $B$, $\psi(1)_p \geq |N \cap \bF(\bar G)|_p=p^{n_1}$.




We know there is a block $B$ such that for every irreducible character $\alpha$ in $B$, $\alpha(1)_p \geq \max(p^{n_3}, p^{n_2}, p^{n_1})$. Since $n_1+n_2+n_3=n$, it is not hard to see that $\alpha(1)_p \geq p^ {\lceil \frac {n} {3} \rceil} $ and thus $d(B) \leq \lfloor \frac {2n} {3} \rfloor$.
\end{proof}

\section{$p$ part of $|G:\bF(G)|$, character degrees and conjugacy class sizes} \label{p part of G/F(G)}
If $P$ is a Sylow $p$-subgroup of a finite group $G$ it is reasonable to expect that the degrees of irreducible characters of $G$ somehow restrict those of $P$. Let $p^a$ denote the largest power of $p$ dividing $\chi(1)$ for an irreducible character $\chi$ of $G$ and $b(P)$ denote the largest degree of an irreducible character of $P$. Conjecture $4$ of Moret\'o ~\cite{Moret1} suggested $\log b(P)$ is bounded as a function of $a$. Moret\'o and Wolf ~\cite{MOWOLF} have proven this for $G$ solvable and even something a bit stronger, namely the logarithm to the base of $p$ of the $p$-part of $|G: \bF(G)|$ is bounded in terms of $a$. In fact, they showed that $|G:\bF(G)|_p \leq p^{19a}$. Moret\'o and Wolf ~\cite{MOWOLF} also proved that $|G: \bF(G)|_p \leq p^{2a}$ for odd order groups, this can also be deduced from ~\cite{AENA2}. This bound is best possible, as shown by an example in ~\cite{AENA2}. 

In this paper, we show that for $p$-solvable groups where $p \geq 5$, $|G:\bF(G)|_p \leq p^{5.5a}$.

\begin{theorem}\label{chardegreeboundnew}
Let $G$ be a $p$-solvable group where $p$ is a prime and $p \geq 5$. Suppose that $p^{a+1}$ does not divide $\chi(1)$ for all $\chi \in \Irr(G)$ and let $P \in \Syl_p(G)$, then $|G: \bF(G)|_p\leq p^{5.5a}$, $b(P)\leq p^{6.5a}$ and $\dl(P) \leq \log_2 a + 5 + \log_2 6.5$.
\end{theorem}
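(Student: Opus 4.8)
The theorem has three parts: (1) $|G:\mathbf{F}(G)|_p \leq p^{5.5a}$, (2) $b(P) \leq p^{6.5a}$, and (3) $\dl(P) \leq \log_2 a + 5 + \log_2 6.5$. The key quantity is $a$, the largest exponent of $p$ dividing a character degree. Parts (2) and (3) should follow from part (1) plus standard facts, so the heart is bounding $|G:\mathbf{F}(G)|_p$.

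**The main strategy.** I want to reduce to understanding $\bar{G}$ modulo its solvable radical, where the simple-group machinery from Section 2 kicks in. Let me think about the structure. The paper has built up Proposition 4, which handles the case $O_\infty(G)=1$ — there it finds characters of $F^*(G)$ whose stabilizers in a Sylow $p$-subgroup land inside small terms of the Fitting series. The connection to character degrees: if $v \in \Irr(F^*(G))$ has $p$-part of orbit size roughly $|\bar{G}:N|_p$, then a character of $G$ over $v$ has degree divisible by that orbit size, which must be $\leq p^a$. So the simple-group part of $|G:\mathbf{F}(G)|_p$ gets controlled by $a$ directly.

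**The plan in order.** The proof should split $|G:\mathbf{F}(G)|_p$ into a "solvable part" and a "simple part." For the solvable part, I would invoke the Moretó–Wolf / Espuelas–Navarro machinery: for $p$-solvable groups (really the solvable quotient acting on the $p'$-Fitting layers), orbit theorems like Gow's (already cited as \cite{GOW}) and the regular-orbit results give that the $p$-part of the relevant linear group is bounded by a small multiple of $a$. The odd-order bound $p^{2a}$ from \cite{AENA2} suggests the solvable contribution is around $2a$ per Fitting layer. For the simple part, apply Proposition \ref{prop4} to $\bar{G} = G/O_\infty(G)$ (or the appropriate quotient): the four orbit representatives of distinct degree force $|\bar{G}:N|_p \leq p^a$, and the structure $N \subseteq \bF_2(\bar{G})$ with abelian Sylow controls the remaining factors, each bounded by $a$ or a small multiple. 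Summing the contributions — two layers at roughly $2a$ plus a $1.5a$ correction — is where the constant $5.5$ emerges, so the bookkeeping of which layer contributes how much is the delicate part.

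**Deriving parts (2) and (3).** Once $|G:\mathbf{F}(G)|_p \leq p^{5.5a}$ is in hand, I expect part (2) to follow from the observation that $b(P)$ exceeds $b(\mathbf{F}(G)_p)$ by at most the $p$-part of $|G:\mathbf{F}(G)|$, combined with the fact that the largest character degree of a $p$-group $\mathbf{F}(G)_p$ is itself bounded by $p^a$ (since $\mathbf{F}(G)_p$ is normal and its characters induce up), giving $b(P) \leq p^a \cdot p^{5.5a} = p^{6.5a}$. For the derived-length bound (3), I would feed $b(P) \leq p^{6.5a}$ into a standard estimate relating the derived length of a $p$-group to the logarithm of its largest character degree — results of the Taketa/Huppert–Blackburn type give $\dl(P) \leq \log_2(2\log_p b(P)) + c$ for appropriate constants, and plugging $\log_p b(P) \leq 6.5a$ yields the stated $\log_2 a + 5 + \log_2 6.5$.

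**The main obstacle.** The hardest part will be extracting the sharp constant $5.5$ rather than a cruder bound. The reduction to simple-and-solvable parts is conceptually clean, but allocating the $p$-power contributions across the Fitting layers $\bF(\bar{G})\cap N$, $N\bF(\bar{G})/\bF(\bar{G})$, and $\bar{G}/N$ — so that the orbit theorems and Proposition \ref{prop4} combine to give exactly $5.5a$ and not, say, $6a$ — requires careful tracking of which characters detect which layers and verifying the abelian-Sylow conclusions of Proposition \ref{prop4} genuinely save the fractional term. I expect the fractional $0.5$ to come from exploiting the abelianness of a Sylow subgroup in one specific quotient, which halves what would otherwise be a full $a$-contribution.
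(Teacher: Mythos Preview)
Your overall strategy matches the paper's: split at the solvable radical $T=O_\infty(G)$, bound $|T:\bF(G)|_p$ by solvable results, apply Proposition~\ref{prop4} to $\tilde G=G/T$ (which has $O_\infty(\tilde G)=1$) to control $|G:T|_p$, and then deduce (2) and (3) from (1). Your derivation of (2) is exactly the paper's; for (3) the paper uses \cite[Theorem 12.26]{Isaacs/book} together with \cite{Podoski} and \cite[Satz III.2.12]{Huppert1} rather than a generic Taketa-type bound, but the shape is as you guessed.

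Where your proposal goes wrong is the bookkeeping behind the constant $5.5$. The decomposition in the paper is $5.5=2.5+3$, not $2+2+1.5$. The solvable radical contributes $|T:\bF(G)|_p\le p^{2.5a}$ via \cite[Remark of Corollary 5.3]{YY5}; this is a single citation, not a per-layer $2a$ estimate, and this is where the fractional $0.5$ actually lives. Above $T$, since $G$ is $p$-solvable the simple components of $F^*(\tilde G)$ are $p'$-groups, so $|G:T|_p=|\bar G|_p$ where $\bar G=\tilde G/F^*(\tilde G)$. Proposition~\ref{prop4} produces $N\lhd\bar G$ with $N\subseteq\bF_2(\bar G)$, and then \emph{three} separate characters (coming respectively from Proposition~\ref{prop4}, Gow's regular orbit theorem \cite{GOW}, and Lemma~\ref{simplecoprime}) witness that each of $|\bar G:N|_p$, $|N\bF(\bar G):\bF(\bar G)|_p$, and $|N\cap\bF(\bar G)|_p$ is at most $p^a$; hence $|\bar G|_p\le p^{3a}$. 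In particular, the abelianness of the Sylow $p$-subgroup of $N\bF(\bar G)/\bF(\bar G)$ does not ``halve'' any contribution; it is part of the setup allowing Gow's theorem to produce a regular orbit for the middle layer, which still contributes a full $a$. If you had tried to extract the $0.5$ from abelianness you would not find it.
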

\begin{proof}
Let $T=O_{\infty}(G)$, the maximal normal subgroup of $G$. Since $\bF(G) \subseteq T$, $\bF(T)=\bF(G)$. Since $T \nor G$, $p^{a+1}$ does not divide $\lambda(1)$ for all $\lambda \in \Irr(T)$. Thus by ~\cite[Remark of Corollary 5.3]{YY5}, $|T: \bF(G)|_p\leq p^{2.5a}$.


Let $\tilde G=G/O_{\infty}(G)$ and $\bar G=\tilde G / F^*(\tilde G)$. It is clear that $F^*(\tilde G)$ is a direct product of finite non-abelian simple groups. By Proposition ~\ref{prop4}(1), there exists $N \nor \bar G$ where $N \subseteq \bF_2(\bar G)$ such that for any $P \in \Syl_p(\bar G)$, we have $\bC_P(v) \subseteq N$, and the Sylow $p$-subgroup of $N \bF(\bar G)/\bF(\bar G)$ is abelian. It is clear that we may find $\bar \gamma \in \Irr(\bar G)$ such that $|\bar G: N|_{p}$ divides $\bar \gamma(1)$. 


Let $P/\bF(\bar G)$ be a Sylow $p$-subgroup of $N \bF(\bar G)/\bF(\bar G)$. Where $Y = O_{p'}(\bF(\bar G))$, observe that $W = \Irr(Y/\Phi(Y))$ is a faithful and completely reducible $P/\bF(\bar G)$-module. By Gow's regular orbit theorem ~\cite[2.6]{GOW}, we have $\mu \in W$ such that $\bC_P(\mu) = \bF(\bar G)$. We may view $\mu$ as a character of the preimage $X$ of $Y$ in $\bar G$. Take $\bar \alpha \in \Irr(\bar G)$ lying over $\mu$, and $\bar \alpha$ lies over an irreducible character $\bar \psi$ of $P$ lying over $\mu$. Clearly, $\bar \psi(1)_p \ge |N \bF(\bar G)/\bF(\bar G)|_p$. As $P$ is normal in $G$, we have $\bar \alpha(1)_p \ge \bar \psi(1)_p \ge |N \bF(\bar G)/\bF(\bar G)|_p$.


Let $P_1$ be the Sylow $p$-subgroup of $\bF(\bar G) \cap N$. By Lemma ~\ref{simplecoprime}, we may find $\nu \in \Irr(F^*(\tilde G))$ such that $\bC_{P_1}(\nu)=1$. Thus by a similar argument as before, we may find an irreducible character $\bar \beta$ of $\bar G$ such that $\bar \beta(1)_p \geq |N \cap \bF(\bar G)|_p$.




Thus $|G:\bF(G)|_p \leq p^{5.5a}$. If $P \in \Syl_p(G)$, then $b(P) \leq |P: O_p(G)||b(O_p(G))|=|G:\bF(G)|_p |b(O_p(G))| \leq p^{5.5a}p^a=p^{6.5a}$.

Now, we want to prove the last part of the statement. By ~\cite[Theorem 12.26]{Isaacs/book} and the nilpotency of $P$, we have that $P$ has an abelian subgroup $B$ of index at most $b(P)^4$. By ~\cite[Theorem 5.1]{Podoski}, we deduce that $P$ has a normal abelian subgroup $A$ of index at most $|P:B|^2$. Thus, $|P:A| \leq |P:B|^2 \leq b(P)^{8s}$, where $b(P)=p^s$. By ~\cite[Satz III.2.12]{Huppert1}, $\dl(P/A) \leq 1+\log_2(8s)$ and so $\dl(P) \leq 2+ \log_2(8s)=5+\log_2(s)$. Since $s$ is at most $6.5a$, the result follows.
\end{proof}


We now state the conjugacy analogs of Theorem ~\ref{chardegreeboundnew}. Given a group $G$, we write $b^*(G)$ to denote the largest size of the conjugacy classes of $G$. 
\begin{theorem}\label{conjugacybound}
Let $G$ be a $p$-solvable group where $p$ is a prime and $p \geq 5$. Suppose that $p^{a+1}$ does not divide $|C|$ for all $C \in \cl(G)$ and let $P \in \Syl_p(G)$, then $|G: \bF(G)|_p \leq p^{5.5a}$, $b^*(P)\leq p^{6.5a}$ and $|P'| \leq p^{6.5a(6.5a+1)/2}$.
\end{theorem}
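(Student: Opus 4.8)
The plan is to run the conjugacy-class analogue of the proof of Theorem \ref{chardegreeboundnew}, exploiting the standard dictionary between the two settings: in a semidirect product $V \rtimes H$ with $V$ abelian and coprime to $p$, the conjugacy class of $v \in V$ has size $|H : \bC_H(v)|$, so an $H$-orbit on $V$ whose length has $p$-part $p^k$ produces an element whose class size is divisible by $p^k$ --- exactly as an $H$-orbit on $\Irr(V)$ produces a character of degree divisible by $p^k$. Under this dictionary the hypothesis that $p^{a+1}\nmid |C|$ for all $C\in\cl(G)$ plays the role that $p^{a+1}\nmid\chi(1)$ played before, and every orbit-theoretic ingredient (Proposition \ref{prop4}, Gow's regular orbit theorem, and Lemma \ref{simplecoprime}) transfers, since those statements concern orbit lengths rather than characters per se. I would prove the three assertions in turn.

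For $|G:\bF(G)|_p \le p^{5.5a}$ I would copy the decomposition of Theorem \ref{chardegreeboundnew}. Put $T = O_{\infty}(G)$, so $\bF(T)=\bF(G)$; since $T \nor G$, for $x \in T$ the length $|x^T|_p$ divides $|x^G|_p \le p^a$, so the conjugacy hypothesis descends to the solvable group $T$ and the conjugacy-class analogue of \cite[Remark of Corollary 5.3]{YY5} gives $|T:\bF(G)|_p \le p^{2.5a}$. Passing to $\bar G = (G/T)/F^*(G/T)$, where $F^*(G/T)$ is a $p'$-group (a product of non-abelian simple composition factors of a $p$-solvable group), I would produce, exactly as before, a normal subgroup $N \nor \bar G$ with $N \subseteq \bF_2(\bar G)$ and then bound the three $p$-exponents $|\bar G:N|_p$, $|N\bF(\bar G):\bF(\bar G)|_p$ and $|N \cap \bF(\bar G)|_p$ each by $p^a$. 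Here the three characters used before are replaced by three conjugacy classes, obtained by applying Proposition \ref{prop4}, Gow's theorem (to the module $Y/\Phi(Y)$ itself rather than to its dual $\Irr(Y/\Phi(Y))$), and Lemma \ref{simplecoprime} to manufacture elements whose class lengths carry the required $p$-parts. Multiplying, $|G:T|_p \le p^{3a}$, and hence $|G:\bF(G)|_p \le p^{5.5a}$.

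The bound $b^*(P) \le p^{6.5a}$ is then elementary. Since $\bF(G)$ is nilpotent, $O_p(G) = \bF(G) \cap P \nor P$ and $|P:O_p(G)| = |G:\bF(G)|_p \le p^{5.5a}$. For $x \in P$ write
\[
|P:\bC_P(x)| = |P : O_p(G)\bC_P(x)| \cdot |O_p(G):\bC_{O_p(G)}(x)|.
\]
The first factor divides $|P:O_p(G)| \le p^{5.5a}$, while $|O_p(G):\bC_{O_p(G)}(x)| = |O_p(G)\bC_G(x):\bC_G(x)|$ divides the $p$-part of $|x^G|$, which is at most $p^a$. Hence $|P:\bC_P(x)| \le p^{6.5a}$ for every $x$, that is, $b^*(P) \le p^{6.5a}$.

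Finally, $|P'| \le p^{6.5a(6.5a+1)/2}$ follows from the preceding bound together with the classical inequality of Vaughan--Lee: a $p$-group whose largest conjugacy class has size $p^b$ satisfies $|P'| \le p^{b(b+1)/2}$; here $p^{b}=b^*(P) \le p^{6.5a}$, so $b \le 6.5a$, giving the stated estimate. I expect the main obstacle to lie in the second paragraph --- specifically, in securing genuine conjugacy-class analogues of the simple-group inputs (an ``at least four distinct class lengths'' statement replacing Theorem \ref{chardegsimple}, and a regular-orbit-on-classes statement replacing Lemma \ref{simplecoprime}), and in checking that the orbit-to-class translation survives in the non-split sections $\bar G$, where $V$ is not literally a complemented normal subgroup and one must argue through Clifford theory and coprime action rather than through an honest semidirect product.
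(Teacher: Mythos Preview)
Your proposal is correct and follows the paper's approach: the paper dispatches the first assertion with the single line ``The proof of the first statement goes similarly as the previous one,'' and your arguments for $b^*(P)\le p^{6.5a}$ and for $|P'|$ via Vaughan--Lee coincide almost verbatim with the paper's. The obstacles you anticipate are not serious: Brauer's permutation lemma transfers Lemma~\ref{simplecoprime} from $\Irr(S)$ to conjugacy classes of $S$ (and an element in a regularly permuted class already has trivial centralizer in $A$), nonabelian simple groups do have at least four distinct class sizes so the input of Theorem~\ref{chardegsimple} also translates, and the passage through quotients is harmless since $|\tilde x^{\,\tilde G}|$ divides $|x^G|$ for any preimage $x\in G$ of $\tilde x\in \tilde G$.
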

\begin{proof}
The proof of the first statement goes similarly as the previous one. Write $N=O_p(G)$. It is clear that $|N: \bC_N(x)|$ divides $|G: \bC_G(x)|$ for all $x \in G$. Thus, if we take $x \in P$ we have that
\[|\cl_P(x)|= |P: \bC_P(x)| \leq |P:N||N: \bC_N(x)| \leq p^{5.5a} p^a=p^{6.5a}\]
 Finally, to obtain the bounds for the order of $P'$ is suffices to apply a theorem of Vaughan-Lee \cite[Theorem VIII.9.12]{Huppert2}.
\end{proof}

The following is a corrected version of ~\cite[Theorem 5.1]{YY1} (Note the $t \leq 19$ in the original statement should be  $t \leq 15$).
\begin{theorem}\label{correction}
If $G$ is solvable, there exists a product $\theta=\chi_1 \dots \chi_t$ of distinct irreducible characters $\chi_i$ of $G$ such that $|G:\bF(G)|$ divides $\theta(1)$ and $t \leq 15.$ Furthermore, if $|\bF_8(G)|$ is odd then we can take $t \leq 3$ and if $|G|$ is odd we can take $t \leq 2$.
\end{theorem}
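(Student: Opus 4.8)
The plan is to reduce the statement to a base-size problem for solvable completely reducible linear groups and then feed in the known orbit theorems. First I would pass to $G/\Phi(G)$: since $\bF(G/\Phi(G))=\bF(G)/\Phi(G)$ the index $|G:\bF(G)|$ is unchanged, and irreducible characters of $G/\Phi(G)$ inflate to distinct irreducible characters of $G$ of equal degree, so I may assume $\Phi(G)=1$. Then $F:=\bF(G)$ is abelian, $\bC_G(F)=F$, and $\bar G:=G/F$ acts faithfully and completely reducibly on the dual module $W:=\Irr(F)$ (a direct sum of irreducible modules over the various primes dividing $|F|$). The Clifford-theoretic translation is the crux of the reduction: if $\lambda_1,\dots,\lambda_t\in W$ satisfy $\bigcap_i \Stab_{\bar G}(\lambda_i)=1$ and I pick $\chi_i\in\Irr(G\mid\lambda_i)$, then $|G:\Stab_G(\lambda_i)|=|\bar G:\Stab_{\bar G}(\lambda_i)|$ divides $\chi_i(1)$, while the faithfulness of the joint action embeds $\bar G$ into $\prod_i \bar G/\Stab_{\bar G}(\lambda_i)$. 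Hence $|G:\bF(G)|=|\bar G|$ divides $\prod_i |G:\Stab_G(\lambda_i)|$, which divides $\theta(1)=\prod_i \chi_i(1)$. So it suffices to bound the base size of $\bar G$ on $W$ and to arrange the $\chi_i$ to be distinct.

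For the bound $t\le 15$ I would invoke the orbit/base-size theorems for solvable completely reducible linear groups in the spirit of \cite{MOWOLF}: such a group has a base of absolute bounded size on any faithful completely reducible module, and a careful accounting of the contributions—treating the primitive and imprimitive cases, and the various characteristics, separately—yields $15$. This accounting is exactly where the original statement of \cite[Theorem 5.1]{YY1} overcounted (giving $19$), so the ``correction'' is really a sharpening of this step. Distinctness of the $\chi_i$ I would secure by choosing the base points $\lambda_i$ in pairwise distinct $\bar G$-orbits, adjusting the base by a bounded amount if two orbit sizes collide; this is legitimate because $\chi_i|_F$ is supported on a single $\bar G$-orbit of $W$, so characters lying over points in distinct orbits are automatically distinct.

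The refinements come from stronger orbit theorems available when the prime $2$ is absent, $2$ being the only obstruction to regular orbits for solvable linear groups. If $|G|$ is odd, Espuelas's orbit theorem \cite{AE1} (the source of the two-regular-orbit conclusions used for Lemma~\ref{simplecoprime}-type statements) provides two regular orbits on $W$; the two corresponding characters are distinct and each has degree divisible by $|G:\bF(G)|$, giving $t\le 2$. If instead only $|\bF_8(G)|$ is odd, then the bottom several terms of the Fitting series of $\bar G$—precisely the part ``closest'' to the module $W$, where the $2$-local extraspecial-type configurations that defeat small bases would have to appear—are of odd order, so those configurations are excluded and the base-size accounting collapses to $t\le 3$.

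The step I expect to be the main obstacle is the sharp base-size accounting that produces the constants $15$ and $3$. Unlike the clean odd-order case, the full solvable case forces one to run the primitive/imprimitive reduction along the Fitting series of $\bar G$ and to control the finitely many ``bad'' $2$-local linear groups; it is precisely this bookkeeping—compounded by the requirement that the chosen characters remain distinct—that is delicate, and it is where the earlier off-by-four error arose.
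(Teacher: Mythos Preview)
The paper itself gives no proof of this theorem: it is simply recorded as a corrected version of \cite[Theorem~5.1]{YY1}, with the note that the constant $19$ appearing there should be $15$. So there is no in-paper argument to compare against, but your proposal contains a genuine gap worth naming.

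Your reduction to $\Phi(G)=1$ and the passage to the faithful completely reducible action of $\bar G=G/\bF(G)$ on $W=\Irr(\bF(G))$ are standard and correct, and your odd-order clause is fine because Espuelas's theorem produces \emph{regular} orbits, where the stabiliser is trivial and the orbit size equals $|\bar G|$. The error is in the general step: from $\bigcap_i\Stab_{\bar G}(\lambda_i)=1$ you infer that $|\bar G|$ \emph{divides} $\prod_i|\bar G:\Stab_{\bar G}(\lambda_i)|$, arguing via the injection $\bar G\hookrightarrow\prod_i \bar G/\Stab_{\bar G}(\lambda_i)$. But the stabilisers are not normal, so this is only a map of $\bar G$-sets; it gives the inequality $|\bar G|\le\prod_i|\bar G:\Stab_{\bar G}(\lambda_i)|$, not divisibility. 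Concretely, take $\bar G=S_3$ with $H_1,H_2$ two distinct subgroups of order~$2$: then $H_1\cap H_2=1$ yet $|\bar G|=6$ does not divide $|\bar G:H_1|\cdot|\bar G:H_2|=9$. Hence a base-size bound for $\bar G$ on $W$ is not, on its own, the right input.

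What the arguments in \cite{MOWOLF} and \cite{YY1} actually supply is stronger than a base: orbit theorems for solvable completely reducible linear groups that, prime by prime, force the $p$-part of $|\bar G|$ to divide a short product of orbit sizes (equivalently, the relevant stabilisers are $p'$-groups, not merely of trivial common intersection). Your outline becomes correct once ``base of size $t$'' is replaced by ``$t$ orbits whose product of sizes is divisible by $|\bar G|$'', but then the constants $15$ and $3$ must come from that sharper divisibility analysis rather than from base-size bounds.
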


For the case $p=3$, we have the following results. The proof is similar as before but using Theorem ~\ref{correction} instead of ~\cite[Remark of Corollary 5.3]{YY5}, and Proposition ~\ref{prop4}(2) instead of Proposition ~\ref{prop4}(1).

\begin{theorem}\label{chardegreeboundnew3}
Let $G$ be a $p$-solvable group where $p$ is a prime and $p = 3$. Suppose that $p^{a+1}$ does not divide $\chi(1)$ for all $\chi \in \Irr(G)$ and let $P \in \Syl_p(G)$, then $|G: \bF(G)|_p\leq p^{20a}$, $b(P)\leq p^{21a}$ and $\dl(P) \leq \log_2 a + 5 + \log_2 21$.
\end{theorem}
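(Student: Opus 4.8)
The plan is to reproduce the proof of Theorem~\ref{chardegreeboundnew} essentially verbatim, changing exactly the two ingredients that were tailored to $p\ge 5$. Write $T=O_{\infty}(G)$; then $T$ is solvable and $\bF(T)=\bF(G)$. Put $\tilde G=G/T$ and $\bar G=\tilde G/F^*(\tilde G)$. Because $G$ is $p$-solvable, $F^*(\tilde G)$ is a direct product of nonabelian simple $p'$-groups, so $|\tilde G|_p=|\bar G|_p$ and hence
\[
|G:\bF(G)|_p=|T:\bF(G)|_p\cdot|\bar G|_p .
\]
Everything reduces to bounding the first factor by $p^{15a}$ and the second by $p^{5a}$, after which $20a=15a+5a$ and the assertions on $b(P)$ and $\dl(P)$ follow formally.

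First I would dispose of the solvable factor. Since $T\nor G$, every $\psi\in\Irr(T)$ lies under some $\chi\in\Irr(G)$ with $\psi(1)\mid\chi(1)$, so $\psi(1)_p\le p^{a}$. Applying Theorem~\ref{correction} to the solvable group $T$ gives distinct characters $\chi_1,\dots,\chi_t\in\Irr(T)$ with $t\le 15$ and $|T:\bF(T)|$ dividing $\chi_1(1)\cdots\chi_t(1)$; taking $p$-parts yields $|T:\bF(G)|_p\le p^{15a}$. This is the only place where Theorem~\ref{correction} replaces \cite[Remark of Corollary 5.3]{YY5}, and it is routine.

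Next I would bound $|\bar G|_p$ by feeding Proposition~\ref{prop4}(2) into the three orbit/block arguments of Theorem~\ref{chardegreeboundnew}. Proposition~\ref{prop4}(2) supplies $N\nor\bar G$ with $N\subseteq\bF_3(\bar G)$, a character $v\in\Irr(F^*(\tilde G))$ with $\bC_P(v)\subseteq N$ for all $P\in\Syl_p(\bar G)$, and the additional fact that the Sylow $p$-subgroup of $N\bF_2(\bar G)/\bF_2(\bar G)$ is abelian. As before: a character lying over $v$ forces $|\bar G:N|_p\le p^{a}$; Gow's regular orbit theorem \cite[2.6]{GOW}, applied to the abelian Sylow $p$-subgroup of $N\bF_2(\bar G)/\bF_2(\bar G)$ acting on $\Irr(Y/\Phi(Y))$ with $Y=O_{p'}(\bF_2(\bar G))$, forces $|N:N\cap\bF_2(\bar G)|_p\le p^{a}$; and Lemma~\ref{simplecoprime}, applied to the Sylow $p$-subgroup of $N\cap\bF(\bar G)$, forces $|N\cap\bF(\bar G)|_p\le p^{a}$. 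Refining $N\subseteq\bF_3(\bar G)$ through the Fitting series gives
\[
|\bar G|_p=|\bar G:N|_p\cdot|N:N\cap\bF_2(\bar G)|_p\cdot|N\cap\bF_2(\bar G):N\cap\bF(\bar G)|_p\cdot|N\cap\bF(\bar G)|_p,
\]
and three of these four factors are at most $p^{a}$.

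The hard part will be the middle Fitting layer $|N\cap\bF_2(\bar G):N\cap\bF(\bar G)|_p$. This term is genuinely new: when $p\ge 5$ one had $N\subseteq\bF_2(\bar G)$, so it was absent, and here Proposition~\ref{prop4}(2) only guarantees an abelian Sylow subgroup one Fitting level higher, so Gow's theorem does not apply to this section and a single regular orbit is unavailable. My plan is to let $R$ be the Sylow $p$-subgroup of this section, acting faithfully and coprimely on $O_{p'}(\bF(\bar G))$, and to produce $\mu_1,\mu_2\in\Irr(O_{p'}(\bF(\bar G)))$ with $\bC_R(\mu_1)\cap\bC_R(\mu_2)=1$; characters of $\bar G$ over $\mu_1$ and over $\mu_2$ then have $p$-parts at least $|R:\bC_R(\mu_1)|$ and $|R:\bC_R(\mu_2)|$, whence $|R|\le p^{a}\cdot p^{a}=p^{2a}$. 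Exhibiting such a base of size two for this coprime action — equivalently, checking that this layer costs exactly $2a$ and no more — is the crux. Granting it, $|\bar G|_p\le p^{5a}$, so $|G:\bF(G)|_p\le p^{15a}\cdot p^{5a}=p^{20a}$. The rest is identical to Theorem~\ref{chardegreeboundnew}: $b(P)\le|P:O_p(G)|\,b(O_p(G))=|G:\bF(G)|_p\,b(O_p(G))\le p^{20a}p^{a}=p^{21a}$, and writing $b(P)=p^{s}$ with $s\le 21a$, the chain \cite[Theorem 12.26]{Isaacs/book}, \cite[Theorem 5.1]{Podoski}, \cite[Satz III.2.12]{Huppert1} gives $\dl(P)\le 5+\log_2 s\le\log_2 a+5+\log_2 21$.
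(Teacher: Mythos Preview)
Your plan is exactly the paper's: replace \cite[Remark of Corollary~5.3]{YY5} by Theorem~\ref{correction} to get $|T:\bF(G)|_p\le p^{15a}$, and replace Proposition~\ref{prop4}(1) by Proposition~\ref{prop4}(2) for the $\bar G$ part. Your arithmetic $15a+5a=20a$ is the intended count, and you have correctly spotted that the shift from $N\subseteq\bF_2(\bar G)$ to $N\subseteq\bF_3(\bar G)$ inserts one extra Fitting layer that was absent for $p\ge 5$; the paper itself gives no details for this theorem beyond the two substitutions, so you have gone further than the paper in even isolating this point.

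There is, however, a genuine technical issue in your handling of that extra layer. You let $R$ be the Sylow $p$-subgroup of the section $(N\cap\bF_2(\bar G))\bF(\bar G)/\bF(\bar G)$ and assert that $R$ acts faithfully and coprimely on $O_{p'}(\bF(\bar G))$. But $R$ is a \emph{quotient}: for the conjugation action of $(N\cap\bF_2(\bar G))\bF(\bar G)$ on $O_{p'}(\bF(\bar G))$ to descend to this quotient you would need $\bF(\bar G)$ to centralize $O_{p'}(\bF(\bar G))$, which fails whenever $O_{p'}(\bar G)$ is nonabelian. So as written the action is not even well-defined, let alone faithful, and your base-of-size-two argument has nothing to act on. (The parallel step for $p\ge 5$ in the paper avoids this because Gow's theorem is applied to the \emph{top} abelian layer acting through the preimage $P$ on $Y$; the quotient there is by $\bF(\bar G)$ acting on $\Irr(Y/\Phi(Y))$, and the coprime-action lemma lets one pass to $Y$ itself.) A cleaner route for your middle layer is to work instead with a Sylow $p$-subgroup $Q$ of $N\cap\bF_2(\bar G)$ itself: $Q$ is an honest $p$-subgroup of $\bar G$, hence acts faithfully and coprimely on $F^*(\tilde G)$, and one can then seek regular orbits on $\Irr(F^*(\tilde G))$ in the spirit of Lemma~\ref{simplecoprime} and the constructions in Proposition~\ref{prop3}; this simultaneously controls $|N\cap\bF_2(\bar G)|_p$ and absorbs your bottom layer. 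Either way, the ``crux'' you flag is real and needs a precise argument; the paper does not supply one.
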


\begin{theorem}\label{conjugacybound3}
Let $G$ be a $p$-solvable group where $p$ is a prime and $p = 3$. Suppose that $p^{a+1}$ does not divide $|C|$ for all $C \in \cl(G)$ and let $P \in \Syl_p(G)$, then $|G: \bF(G)|_p \leq p^{20a}$, $b^*(P)\leq p^{21a}$ and $|P'| \leq p^{21a(21a+1)/2}$.
\end{theorem}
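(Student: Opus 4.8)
The plan is to run the argument of Theorem~\ref{conjugacybound} (the conjugacy statement for $p \geq 5$) but feed in the $p = 3$ inputs, exactly as Theorem~\ref{chardegreeboundnew3} is obtained from Theorem~\ref{chardegreeboundnew}: Theorem~\ref{correction} replaces \cite[Remark of Corollary 5.3]{YY5} for the solvable layer, and Proposition~\ref{prop4}(2) replaces Proposition~\ref{prop4}(1) for the semisimple quotient. The three conclusions come out in sequence: first the index bound $|G:\bF(G)|_p \leq p^{20a}$, from which $b^*(P) \leq p^{21a}$ follows by a centralizer comparison, and finally the bound on $|P'|$ via a theorem of Vaughan-Lee. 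Throughout I note that the hypothesis is inherited by sections: if $H \nor G$ or $H = G/M$, then for every relevant element the $p$-part of its class size divides a class size of $G$ and hence is at most $p^a$.

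First I would establish $|G:\bF(G)|_p \leq p^{20a}$. Put $T = O_\infty(G)$, so $\bF(T) = \bF(G)$ and the class-size hypothesis holds in $T$; the conjugacy-class dual of Theorem~\ref{correction} (product of at most $15$ class sizes controlling $|T:\bF(T)|$) yields $|T:\bF(G)|_p \leq p^{15a}$. Passing to $\tilde G = G/T$ and $\bar G = \tilde G / F^*(\tilde G)$, $p$-solvability forces every non-abelian composition factor, and hence $F^*(\tilde G)$ itself, to be a $p'$-group, so $|\tilde G|_p = |\bar G|_p$ and $|G:\bF(G)|_p = |T:\bF(G)|_p\,|\bar G|_p$. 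Applying Proposition~\ref{prop4}(2) together with Gow's regular orbit theorem \cite[2.6]{GOW} and Lemma~\ref{simplecoprime}, read through the coprime-action identification of $P$-orbits on $\Irr(F^*(\tilde G))$ with $P$-orbits on its conjugacy classes, one peels off the successive factors $|\bar G:N|_p$, $|N\bF_2(\bar G)/\bF_2(\bar G)|_p$, and the levels of $N \cap \bF_2(\bar G)$ in the Fitting series, each of which is at most $p^a$. Since $N \subseteq \bF_3(\bar G)$ carries one more Fitting level than in the $p \geq 5$ case (where $N \subseteq \bF_2$ gave three factors and $p^{3a}$), these contribute $p^{5a}$ in total, so $|\bar G|_p \leq p^{5a}$ and $|G:\bF(G)|_p \leq p^{15a} p^{5a} = p^{20a}$.

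With the index bound in hand the remaining assertions are routine and mirror Theorem~\ref{conjugacybound}. Write $N = O_p(G)$, so that $N = P \cap \bF(G)$ and $|P:N| = |G:\bF(G)|_p \leq p^{20a}$. For $x \in P$ the index $|N:\bC_N(x)|$ is a $p$-power dividing $|G:\bC_G(x)|$, hence at most $p^a$, and therefore
\[
|\cl_P(x)| = |P:\bC_P(x)| \leq |P:N|\,|N:\bC_N(x)| \leq p^{20a}\,p^a = p^{21a},
\]
so $b^*(P) \leq p^{21a}$. Writing $b^*(P) = p^c$ with $c \leq 21a$, Vaughan-Lee's theorem \cite[Theorem VIII.9.12]{Huppert2} bounds the derived subgroup of a $p$-group of breadth $c$ by $|P'| \leq p^{c(c+1)/2} \leq p^{21a(21a+1)/2}$, which is the last statement.

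The main obstacle is the first bound, and specifically its translation from characters to classes. Every tool invoked above (Proposition~\ref{prop4}, the ``lying over'' arguments, Gow's theorem, Lemma~\ref{simplecoprime}, Theorem~\ref{correction}) is stated for irreducible characters, so the crux is to transport each ingredient to conjugacy classes. For the semisimple quotient this is governed by Brauer's permutation lemma, which renders the $P$-action on $\Irr(F^*(\tilde G))$ and on the classes of $F^*(\tilde G)$ permutation-isomorphic under the coprime action, so the centralizer bounds of Proposition~\ref{prop4} carry over verbatim; for the solvable layer $T$ one must supply the class-size dual of Theorem~\ref{correction}. The delicate point that pins down the constant $20$ is verifying that the extra Fitting level in Proposition~\ref{prop4}(2) for $p = 3$ contributes precisely the factor $p^{5a}$ rather than the $p^{3a}$ available for $p \geq 5$.
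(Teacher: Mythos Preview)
Your proposal is correct and follows exactly the approach the paper indicates: the paper's own proof of this theorem consists solely of the sentence preceding Theorems~\ref{chardegreeboundnew3} and~\ref{conjugacybound3}, namely that one reruns the earlier argument with Theorem~\ref{correction} in place of \cite[Remark of Corollary 5.3]{YY5} and Proposition~\ref{prop4}(2) in place of Proposition~\ref{prop4}(1), and then deduces the $b^*(P)$ and $|P'|$ bounds via the centralizer comparison and Vaughan--Lee exactly as in Theorem~\ref{conjugacybound}. You have in fact supplied more detail than the paper does, correctly flagging that the character-theoretic orbit inputs must be transported to conjugacy classes (via Brauer's permutation lemma under the coprime action on $F^*(\tilde G)$, and via the class-size dual of Theorem~\ref{correction} for the solvable radical); the only soft spot is your bookkeeping ``one more Fitting level $\Rightarrow$ five factors,'' which does not quite parse, but since the paper itself gives no explicit factor count and $p^{20a}$ is the target inequality rather than an equality, this does not affect correctness.
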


\section{Discussions} \label{Discussions}


In order to improve the bounds of the results in this paper, one might need to study the situation when a $p$-solvable group acts on a field of characteristic does not equal to $p$, and hope that a similar result as ~\cite[Theorem 3.3]{YY5} holds. Also, a strengthened version of Lemma ~\ref{simplecoprime} would also be helpful in improving the bounds.




\end{document}